\newtheorem{lem}{Lemma}
\newtheorem{prop}{Proposition}
\newtheorem{thm}{Theorem}
\newtheorem{cor}{Corollary}
\newtheorem{exm}{Example}
\newtheorem{rmk}{Remark}
\newcommand{\pr}{{\rm Pr}}
\newcommand{\eunump}[2]{ \left\langle
 \begin{array}{c}
  #1\\#2
 \end{array}
 \right\rangle \hspace{-2mm}\raisebox{-3mm}{\tiny $p$}
}
\begin{document}

\begin{center}
 {\Large A  generalization of carries processes and Eulerian numbers}

 \bigskip
 {\large Fumihiko Nakano\footnote{Department of Mathematics, Gakushuin University, 1-5-1, Mejiro, Toshima-ku,
Tokyo, 171-8588, Japan. e-mail : fumihiko@math.gakushuin.ac.jp} 
 and Taizo Sadahiro\footnote{
 Department of Computer Science, Tsuda Colledge, Tokyo, Japan. e-mail :
 sadahiro@tsuda.ac.jp}
 }
\end{center}

\begin{abstract}
 We study a generalization of Holte's amazing matrix,
 the
 transition probability matrix of the Markov chains of
 the 'carries' in a {\em non-standard} numeration system.
 The stationary distributions are explicitly described by
 the numbers which can be regarded as a
 generalization of the Eulerian numbers and
 the MacMahon numbers.
 We also show that similar properties hold
 even for the numeration systems with the negative bases.
\end{abstract}

MSC: 60C05, 60J10, 05E99

Keywords: Carries, Markov chain, Eulerian number.

\section{Introduction and statements of results}
The transition probability matrix so-called 'amazing matrix' of
the Markov chain of the 'carries' has very nice properties \cite{Holte},
and has unexpected connection to
the Markov chains of riffle shuffles \cite{DiaconisFulman,DiaconisFulman2}.
Diaconis and Fulman \cite{DiaconisFulman2} studies
a variant of the carries process, type $B$ carries process.
Novelli and Thibon studies the carries process in terms of
noncommutative symmetric functions \cite{NovelliThibon}.
This paper studies a generalization of the carries process which 
includes Diaconis and Fulman's type $B$ carries process as a
special case.
We study the transition probability matrices of
the Markov chains of the carries in the numeration systems with non-standard digit sets.
We show that the matrices have the eigenvectors which can be
perfectly described by a generalization of Eulerian numbers and
the MacMahon numbers \cite{oeismacmahon,MacMahon,ChowGessel,DiaconisFulman2}.
We also show that similar properties hold even for
the numeration systems with negative bases.

\subsection{Numeration system}

Throughout the paper, $b$ denotes a positive integer and 
${\mathcal D} = \{d, d + 1, \ldots, d + b - 1\}$ denotes a
set of integers containing $0$. Therefore, $-b < d < b$.
Then, we have a numeration system $(b,{\mathcal D})$:
Suppose that an integer $x$ has a representation of the form,
\begin{equation}\label{eq:rep}
 x = (x_k x_{k-1}\cdots x_0)_b \stackrel{def}{=} x_0 + x_1b + x_2b^2 + \cdots + x_kb^k,
 ~~~
 x_0, x_1, \ldots ,x_k\in {\mathcal D}, x_k\neq 0.
\end{equation}
Then, it can be easily shown that 
this representation is uniquely determined for $x$ and
\[
 \left\{
  (x_k x_{k-1}\cdots x_0)_b
  \,\bigg|\, k\geq 0,
 x_0, x_1, \ldots ,x_k\in {\mathcal D}  
 \right\}
 =
 \begin{cases}
  {\mathbb Z} & d \neq 0,-b + 1,\\
  {\mathbb N} & d = 0,\\
  -{\mathbb N} & d = -b + 1.\\
 \end{cases}
\]
is closed under the addition,
where ${\mathbb N}$ denotes the set of
non-negative integers.

\subsection{Carries process}
Let $\{X_{i,j}\}_{1\leq i\leq n, j\geq 0}$  be the set of 
independent random variables each of which 
is distributed uniformly over ${\mathcal D}$.
Define the two stochastic processes $(A_0, A_1, A_2,\ldots)$, 
and $(C_0, C_1, C_2, \ldots)$ in
the following way:
$C_0 = 0$  with probability one.
$(A_i)_{i\geq 0}$ is a sequence of ${\mathcal D}$-valued random variables satisfying
\[
 A_i \equiv  C_i + X_{1,i} + \cdots + X_{n,i}\pmod{b},~~~ i = 0,1,2,\ldots.
\]
and
\[
 C_i = \frac{C_{i - 1} + X_{1,i-1} + \cdots + X_{n,i-1} - A_{i-1}}{b},
 ~~~
 i = 1,2,3,\ldots.
\]
(See Figure $\ref{fig:carryprocess}$.)
It is obvious that $(C_0, C_1, C_2, \ldots )$ is a Markov process,
which we call  
the {\em carries process} with $n$ summands or simply $n$-{\em carry process}
over $(b,{\mathcal D})$.

\begin{figure}[H]
  \[
  \begin{array}{cccccccc}
   & \cdots  & C_4 & C_3 & C_2 & C_1 & C_0\\
   \\
    &\cdots     & X_{1,4} & X_{1,3} & X_{1,2} & X_{1,1} & X_{1,0}\\
    &\cdots     & X_{2,4} & X_{2,3} & X_{2,2} & X_{2,1} & X_{2,0}\\
    &\cdots     & X_{3,4} & X_{3,3} & X_{3,2} & X_{3,1} & X_{3,0}\\
    &\cdots     & \vdots & \vdots & \vdots & \vdots & \vdots\\
   +)&\cdots     & X_{n,4} & X_{n,3} & X_{n,2} & X_{n,1} & X_{n,0}\\
   \hline
    &\cdots & A_4  & A_3 & A_2 & A_1 & A_0
  \end{array}
  \]
 \caption{Carries process}
 \label{fig:carryprocess}
\end{figure}

%

\subsection{A generalization of Eulerian numbers}
Let $p\geq 1$ be a real number and  $n$ a positive integer.
Then we define an array of numbers $v_{i,j}^{(p)}(n)$ for
$i = 0,1,\ldots,n$ and $j = 0, 1,\ldots, n+1$ by
\begin{equation}\label{eq:gedef}
 v_{i,j}^{(p)}(n) =
 \sum_{r = 0}^j
 (-1)^r
 {n + 1\choose r} 
 [p(j - r) + 1]^{n-i},
\end{equation}
and define $v_{i,-1}^{(p)}(n) = 0$.
We denote
\[
 \eunump{n}{j} = v_{0,j}^{(p)}(n),
\]
which can be regarded as a generalization
of the Eulerian numbers.
In fact, $\left\{\eunump{n}{j}\right\}$ forms the array of the 
ordinary Eulerian numbers when $p = 1$, and  
MacMahon numbers \cite{oeismacmahon,MacMahon,ChowGessel,DiaconisFulman2} when $p = 2$.

\subsection{Statement of the result}

Throughout the paper, $\Omega = \Omega_n(b,{\mathcal D})$ denotes the
state space of the $n$-carry process over $(b,{\mathcal D})$,
that is, the set of possible values of carries, and
$p_{i,j}$ denotes the 
transition probability $\pr(C_{i + 1} = j \,|\, C_{i} =i)$
for $i,j\in\Omega_n(b,{\mathcal D})$.
For computational convenience, it is desirable for the transition probability 
matrix to have indices starting from $0$.
We define $\tilde{p}_{i,j} = p_{i + s, j + s}$, where
$s$ is the minimal element of $\Omega_n(b,{\mathcal D})$.
Then, we define the matrix $P$ by
\begin{equation}
 \label{eq:amazingmat}
 P = \left(\tilde{p}_{i,j}\right)_{0\leq i,j\leq \#\Omega -1},
\end{equation}
where $\#\Omega$ denotes the size of the state space,
which is explicitly computed in Lemma $\ref{lem:statespace}$.
$P$ is the central object of this paper.
\begin{rmk}
 As we will show in the later sections,
 this matrix $P$ which we regard as a generalization of
 Holte's amazing matrix is determined only by $b, n$ and $p$,
 and therefore these amazing matrices with same $n$ and $p$ form a
 commutative family.
 Holte's amazing matrix corresponds to the case when $p = 1$ and
 Diaconis and Fulman's type B carries process corresponds to
 the case when $p = 2$.
\end{rmk}

\begin{thm}\label{thm:main}
 Let $\Omega = \Omega_n(b,{\mathcal D})$ 
 be the state space of the $n$-carry 
 process over $(b,{\mathcal D})$, and
 $m = \#\Omega_n(b,{\mathcal D})$.
 let $p$ be defined by
 \begin{equation}\label{eq:mainth}
  p = 
 \begin{cases}
   \frac{1}{\{(n - 1)(-l)\}} & (n - 1)l \not\in {\mathbb Z} 
  ,\\
  1 &  (n - 1)l \in {\mathbb Z}
  ,
 \end{cases}
 \end{equation}
 where $l = d/(b - 1)$ and $\{x\} = x - \lfloor x\rfloor$,
 and let $V = \left(v_{i,j}^{(p)}(n)\right)_{0\leq i,j\leq m - 1}$.
 Then, 
 the transition probability matrix $P$
 is diagonalized by $V$:
 \[
  VPV^{-1} = {\rm diag}\left(1,b^{-1},\ldots, b^{-(m - 1)}\right).
 \]
 In particular, by {\rm Lemma} $\ref{lem:sumeuler}$, the probability vector $\pi$ of the stationary distribution 
 of the carries process is
 \[
 \pi = (\pi(s),\pi(s + 1),\ldots, \pi(s + m - 1)) = 
 \frac{1}{p^n n!}
 \left(\eunump{n}{0},\ldots,\eunump{n}{m - 1}\right)
 \]
\end{thm}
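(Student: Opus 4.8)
The plan is to compute the transition probabilities explicitly, to verify by a generating-function computation that the rows of $V$ are left eigenvectors of $P$ with eigenvalues $1,b^{-1},\dots,b^{-(m-1)}$, and finally to read off the stationary distribution as the row belonging to eigenvalue $1$. First I would make the carry recursion explicit. Since $A_i$ is the unique element of $\mathcal D=\{d,\dots,d+b-1\}$ congruent to $C_i+\sum_k X_{k,i}$ modulo $b$, one gets $C_{i+1}=\lfloor (C_i+\sum_k X_{k,i}-d)/b\rfloor$; writing $Y_{k,i}=X_{k,i}-d$, which is uniform on $\{0,\dots,b-1\}$, and $S=\sum_{k=1}^{n}Y_{k,i}$, this becomes $C_{i+1}=\lfloor (C_i+S+(n-1)d)/b\rfloor$. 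Hence, with $s=\min\Omega$ and $q=(n-1)d$, the event $\{C_{i+1}=j+s\mid C_i=k+s\}$ is exactly $\{b(j+s)-k-s-q\le S\le b(j+s)-k-s-q+b-1\}$, a window of $b$ consecutive values of $S$, so
\[
 \tilde p_{k,j}=\frac{1}{b^{n}}\sum_{t=0}^{b-1}N\!\big(a_j-k+t\big),\qquad a_j=b(j+s)-s-q,
\]
where $N(t)=\#\{Y\in\{0,\dots,b-1\}^{n}:\sum_k Y_k=t\}$ has generating function $\sum_t N(t)x^{t}=\big((1-x^{b})/(1-x)\big)^{n}$.

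Next I would record the generating function of a row of $V$. Interchanging the order of summation in (\ref{eq:gedef}) and summing the binomial series gives
\[
 g_i(x):=\sum_{j}v_{i,j}^{(p)}(n)\,x^{j}=(1-x)^{n+1}\sum_{m\ge 0}(pm+1)^{n-i}x^{m}=(1-x)^{i}\,Q_{n-i}(x),
\]
with $Q_{n-i}$ of degree $\le n-i$ and $Q_{n-i}(1)=p^{\,n-i}(n-i)!\neq0$; thus $g_i$ vanishes to order exactly $i$ at $x=1$ and has degree $\le n$, so the support of $v_{i,\cdot}^{(p)}(n)$ lies in $\{0,\dots,n\}\subseteq\{0,\dots,m-1\}$ and truncating to the state space loses nothing. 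The identity $VPV^{-1}=\mathrm{diag}(1,\dots,b^{-(m-1)})$ is equivalent to $VP=DV$, i.e.\ to the left-eigenvector relations $\sum_k v_{i,k}^{(p)}(n)\,\tilde p_{k,j}=b^{-i}v_{i,j}^{(p)}(n)$. Multiplying by $x^{j}$, summing over $j$, and inserting the explicit $\tilde p_{k,j}$, this reduces to the single functional identity $\sum_k v_{i,k}^{(p)}(n)\,\Phi_k(x)=b^{-i}g_i(x)$, where $\Phi_k(x)=\sum_j\tilde p_{k,j}x^{j}$ is the one-step generating function of the carry. The scalar $b^{-i}$ should emerge from the interplay between the factor $\big((1-x^{b})/(1-x)\big)^{n}$ coming from the law of $S$ and the order-$i$ vanishing of $g_i$ at $x=1$: passing to $\lfloor(\,\cdot\,+S+q)/b\rfloor$ is a $b$-section of generating functions (an average over $b$-th roots of unity), and since $(1-x^{b})=(1-x)\sum_{t=0}^{b-1}x^{t}$ carries a factor $b$ at $x=1$ for each power of $(1-x)$, this is what converts the degree-$i$ zero into the eigenvalue $b^{-i}$.

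The hard part will be this functional identity together with the precise determination of $p$ in (\ref{eq:mainth}). The windows defining $\tilde p_{k,j}$ are offset by the non-integer amount governed by $(n-1)l=(n-1)d/(b-1)$, and the whole purpose of the choice $p=1/\{(n-1)(-l)\}$ (and $p=1$ when $(n-1)l\in\mathbb Z$) is to make the window boundaries align with the arguments $p(j-r)+1$ occurring in (\ref{eq:gedef}); equivalently $1/p=\{(n-1)(-l)\}$ is exactly the fractional offset of the carry, so that $(pm+1)=p(m+1/p)$ encodes it. Controlling the boundary (non-polynomial) corrections of $N(t)$ and checking that they respect this alignment for every base $b$ is the delicate point; conceptually it reflects that the stationary law of the carry is that of $\lfloor U_1+\cdots+U_n+(n-1)l\rfloor$ for i.i.d.\ uniform $U_k$, whose atoms are the generalized Eulerian numbers.

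Once the eigenrelations are established, the eigenvalues $1,b^{-1},\dots,b^{-(m-1)}$ are distinct, so the rows of $V$ are linearly independent, $V$ is invertible, and the diagonalization follows. Finally, the row $i=0$ is the left eigenvector for eigenvalue $1$, hence proportional to the stationary vector $\pi$; normalizing by its coordinate sum, which equals $p^{n}n!$ by Lemma \ref{lem:sumeuler}, gives $\pi=(p^{n}n!)^{-1}\big(\eunump{n}{0},\dots,\eunump{n}{m-1}\big)$, with $m=\#\Omega$ as in Lemma \ref{lem:statespace}.
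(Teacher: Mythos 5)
Your overall strategy is the paper's own: show that the rows of $V$ are left eigenvectors of $P$ with eigenvalues $b^{-i}$, deduce the diagonalization from the distinctness of the eigenvalues, and normalize the top row by Lemma \ref{lem:sumeuler}. The set-up is sound (the window representation of $\tilde p_{k,j}$ in terms of $S=\sum_k Y_k$, the factorization $g_i(x)=(1-x)^{i}Q_{n-i}(x)$, the final normalization). But there is a genuine gap exactly where you flag ``the hard part'': the eigenvector identity $\sum_k v^{(p)}_{i,k}(n)\,\tilde p_{k,j}=b^{-i}v^{(p)}_{i,j}(n)$ and the role of the specific value of $p$ in $(\ref{eq:mainth})$ are said to ``emerge'' from a $b$-section/roots-of-unity heuristic and an order-of-vanishing count at $x=1$, but are never established. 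That identity \emph{is} the theorem; a local analysis at $x=1$ can at best predict what the eigenvalue would be if the row were already known to be an eigenvector, and it cannot by itself produce the exact polynomial identity for every $j$.

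Concretely, two steps are missing. First, you never derive the normalized transition probability in the form that encodes $p$ (Lemma \ref{lem:standardp}): after shifting indices by $s=\lfloor (n-1)l\rfloor$ one must show
\[
 \tilde p_{k,j}=\frac{1}{b^n}\sum_{r=0}^{j}(-1)^r{n+1\choose r}{n+K(j,r)-k\choose n},
 \qquad K(j,r)=b(j-r)+\frac{b-1}{p},
\]
which rests on the floor-function identity $(b-1)\bigl(1-\{(n-1)l\}\bigr)=\frac{b-1}{p}$ tying the definition of $p$ to the digit set; this is precisely the ``alignment'' you describe qualitatively but do not prove. Second, the eigenvalue computation is then a finite exact convolution, not an asymptotic statement: since $v^{(p)}_{i,k}(n)$ is the coefficient of $x^k$ in $(1-x)^{n+1}\sum_{\mu\ge 0}(p\mu+1)^{n-i}x^\mu$, one has $\sum_{k=0}^{K}{n+K-k\choose n}v^{(p)}_{i,k}(n)=(pK+1)^{n-i}$, and the factor $b^{-i}$ comes from the algebraic identity $pK(j,r)+1=b\,(p(j-r)+1)$, giving $\sum_k v^{(p)}_{i,k}(n)\tilde p_{k,j}=b^{-n}\,b^{\,n-i}\sum_r(-1)^r{n+1\choose r}(p(j-r)+1)^{n-i}=b^{-i}v^{(p)}_{i,j}(n)$. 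Supplying these two computations would complete your argument along essentially the paper's lines. (A minor further point: when $(n-1)l\in\mathbb Z$ you have $m=n$, so $\{0,\dots,n\}\not\subseteq\{0,\dots,m-1\}$ and the truncation of the rows additionally requires $v^{(1)}_{i,n}(n)=0$.)
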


\begin{rmk}
 It is remarkable that our amazing matrix has the
 eigenvalues of the same form $1,1/b,1/b^2,\ldots$ as those of
 Holte's amazing matrix.
\end{rmk}

\begin{cor}\label{cor:sumeuler}
 Let $S_n$ be the sum of $n$ independent random variables
 each of which is distributed uniformly over the unit interval $[0,1]$.
 Then, for all positive real numbers $p \geq 1$ and integers $k$,
 the probability of $S_n$ being in the interval $\frac{1}{p} + [k-1,k]$
 is
 \begin{equation}
  \label{eq:sumunifiid}
 \pr\left( S_n \in \frac{1}{p} + [k -1, k]\right) =
 \frac{1}{p^nn!}\eunump{n}{k}.
 \end{equation}
\end{cor}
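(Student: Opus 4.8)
The plan is to prove the identity directly from the Irwin--Hall distribution of $S_n$, independently of the Markov-chain machinery. I would start from the classical closed form for the cumulative distribution function: for every real $x$ (with the convention $\binom{n}{j}=0$ for $j>n$),
\[
 F_n(x):=\pr(S_n\le x)=\frac{1}{n!}\sum_{j=0}^{\lfloor x\rfloor}(-1)^j\binom{n}{j}(x-j)^n,
\]
which follows by an easy induction on $n$ from the convolution $f_n=f_{n-1}*\mathbf{1}_{[0,1]}$. Since $\frac1p+[k-1,k]=[a,b]$ with $a=k-1+\tfrac1p$ and $b=k+\tfrac1p$, the quantity to compute is $F_n(b)-F_n(a)$.

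The next step is to locate the endpoints relative to the integers. Because $p\ge 1$ we have $\tfrac1p\in(0,1]$, so $b\in(k,k+1]$ and $a\in(k-1,k]$; hence $\lfloor b\rfloor=k$ and $\lfloor a\rfloor=k-1$, the only exceptional case being $p=1$ (where $a=k$, $b=k+1$), in which the extra boundary summands carry a factor $(x-j)^n$ with $x-j=0$ and therefore vanish for $n\ge 1$. (When $k=0$ one has $a<0$, so $F_n(a)=0$, consistently with the empty sum $\lfloor a\rfloor=-1$.) Thus in all cases
\[
 F_n(b)-F_n(a)=\frac1{n!}\Bigl[\,\sum_{j=0}^{k}(-1)^j\binom nj(k+\tfrac1p-j)^n-\sum_{j=0}^{k-1}(-1)^j\binom nj(k-1+\tfrac1p-j)^n\Bigr].
\]

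I would then reindex the subtracted sum by $j\mapsto j-1$, which gives the identity
\[
 -\sum_{j=0}^{k-1}(-1)^j\binom nj(k-1+\tfrac1p-j)^n=\sum_{j=1}^{k}(-1)^j\binom{n}{j-1}(k+\tfrac1p-j)^n,
\]
and add the two sums termwise. The $j=0$ term is $(k+\tfrac1p)^n$, while for $1\le j\le k$ Pascal's rule $\binom nj+\binom n{j-1}=\binom{n+1}j$ collapses the coefficients, yielding
\[
 F_n(b)-F_n(a)=\frac1{n!}\sum_{j=0}^{k}(-1)^j\binom{n+1}{j}(k-j+\tfrac1p)^n.
\]
Finally, writing $(k-j+\tfrac1p)^n=p^{-n}\,[\,p(k-j)+1\,]^n$ and comparing with the definition \eqref{eq:gedef} of $\eunump{n}{k}=v^{(p)}_{0,k}(n)$ produces exactly $\tfrac1{p^n n!}\eunump{n}{k}$, which is \eqref{eq:sumunifiid}.

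The computation is essentially bookkeeping; the one point requiring care—and the reason the hypothesis $p\ge1$ is essential—is the placement of the floors $\lfloor a\rfloor,\lfloor b\rfloor$, since for $p<1$ the shift $\tfrac1p$ exceeds $1$ and the two CDF sums would no longer have lengths differing by exactly one, destroying the clean Pascal collapse. Verifying that the exceptional endpoint contributions at $p=1$ and the degenerate case $k=0$ are consistent with the vanishing/empty summands is therefore the main thing to check carefully.
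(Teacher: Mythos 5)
Your proof is correct, but it takes a genuinely different route from the paper. The paper derives the corollary as a consequence of Theorem \ref{thm:main}: it constructs, for each rational $p\ge 1$, a concrete numeration system $(b,\mathcal D)$ via \eqref{eq:findbd} whose $n$-carry process realizes that $p$, shows that the stationary probability $\pi(c)$ equals $\pr(X_1+\cdots+X_n\in c+[l,l+1])$ for uniforms on $[l,l+1]$ by a discretization/limit argument (the variables $X_i^{(k)}$), translates the interval $c+[l,l+1]$ into $\tfrac1p+[k-1,k]$, and finally extends to irrational $p$ by continuity in $p$. You instead compute $F_n(k+\tfrac1p)-F_n(k-1+\tfrac1p)$ directly from the Irwin--Hall formula and collapse the two alternating sums with Pascal's rule into the defining sum \eqref{eq:gedef} for $v^{(p)}_{0,k}(n)$; this is precisely the ``direct derivation from Feller's formula'' that the paper's own remark after the corollary mentions but does not carry out. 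Your argument is shorter and more self-contained: it needs no Markov-chain input, works uniformly for all real $p\ge1$ without the rational-approximation step, and handles all integers $k$ at once (the cases $k<0$ and $k>n$ reduce to the vanishing of an $(n+1)$-st finite difference of a degree-$n$ polynomial, matching Lemma \ref{lem:lastzero}). What the paper's route buys instead is the probabilistic interpretation — the identification of the generalized Eulerian numbers with the stationary distribution of the carries chain — which is the real content of the paper; your proof establishes the identity but not that connection. Your handling of the boundary cases ($p=1$ giving integer endpoints whose extra summands vanish, and $k=0$ giving an empty sum for $F_n(a)$) is the right thing to check and is done correctly.
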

%
\begin{rmk}
 This corollary can be derived directly from the
 formula of the distribution of sums of independent
 uniform random variables in {\rm  \cite{Feller}}, and 
 it is shown for the case $p = 2$ in {\rm  \cite{DiaconisFulman2}}.
\end{rmk}

\begin{exm}
Let $p \geq 1$ be a real number.
As will be shown in the later section, the array of generalized
Eulerian numbers satisfies the following recursive relations
\[
 \eunump{n + 1}{k} = (pk + 1)\eunump{n}{k} + (p(n + 1 - k) - 1)\eunump{n}{k - 1}.
\]
and the boundary conditions
\[
 \eunump{n}{0} = 1,\mbox{~~ and ~~} \eunump{n}{k} = 0 \mbox{~ for ~} k\ > n.
\]
(See Figure $\ref{fig:gentri}$.)

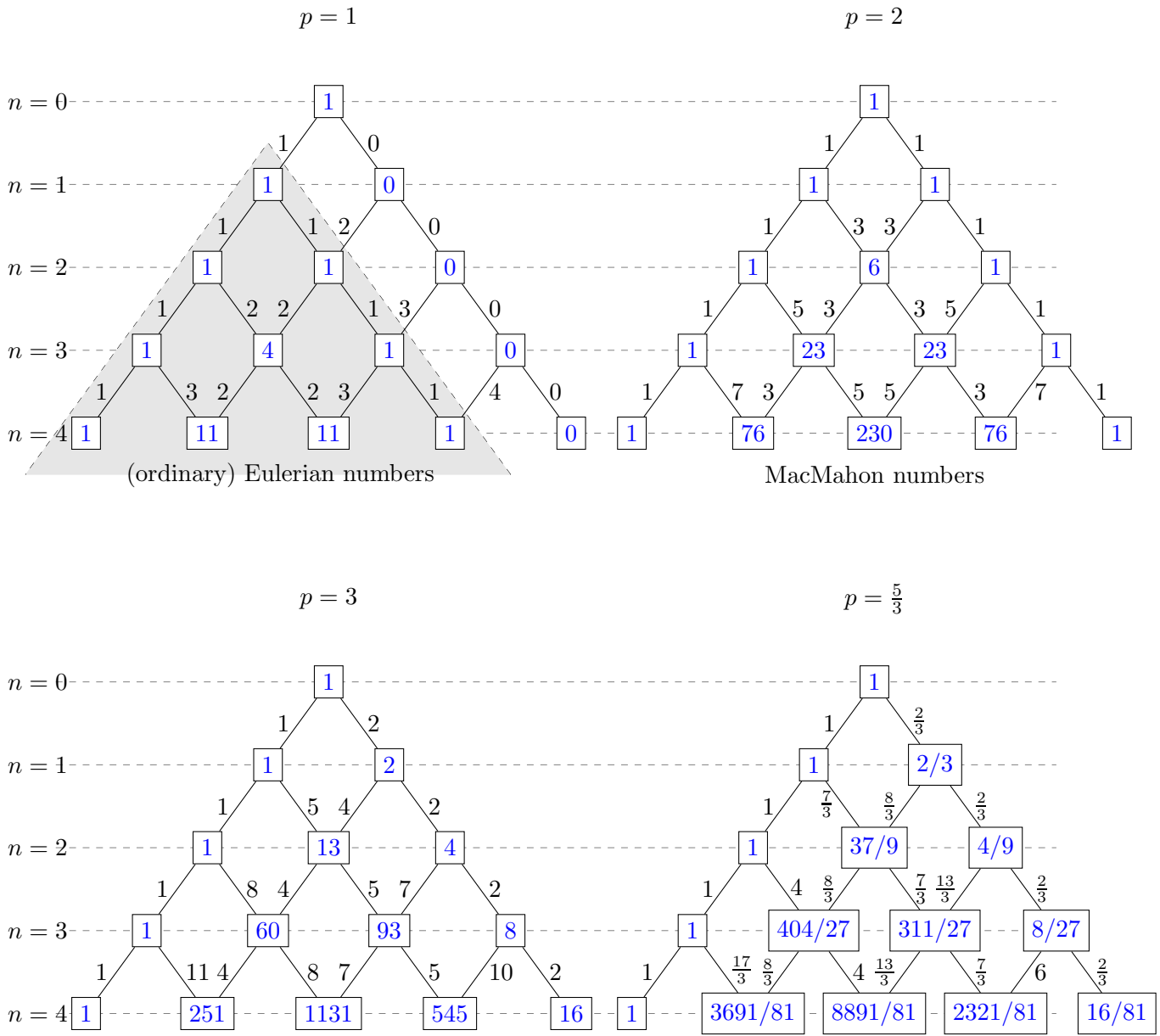
\begin{figure}[H]
 \begin{center}
      \begin{tikzpicture}[yscale = 1.3,xscale = 0.95]
     \draw (0,1) node {$p = 1$};
     \foreach \n in {0,1,2,3,4}{
     \draw [very thin, gray,dashed] (-4.5,-\n) -- (12,-\n);
     }
     \draw [very thin,dashed,fill = gray!20] (3.,-4.5) -- (-1,-0.5) -- (-5,-4.5);
     \fill [fill = gray!20] (-1,-0.5) -- (-5,-4.5) -- (3.,-4.5) -- cycle;
     \draw (-.8,-4.5) node {(ordinary) Eulerian numbers};
      \node [fill = white, draw](a10)at (0,0) {\color{blue}$1$};

      \node [fill = white, draw](a20)at (-1,-1) {\color{blue}$1$};
      \node [fill = white, draw](a21)at (1,-1) {\color{blue}$0$};

      \node [fill = white, draw](a30)at (-2,-2) {\color{blue}$1$};
      \node [fill = white, draw](a31)at (0,-2) {\color{blue}$1$};
      \node [fill = white, draw](a32)at (2,-2) {\color{blue}$0$};

      \node [fill = white, draw](a40)at (-3,-3) {\color{blue}$1$};
      \node [fill = white, draw](a41)at (-1,-3) {\color{blue}$4$};
      \node [fill = white, draw](a42)at (1,-3) {\color{blue}$1$};
      \node [fill = white, draw](a43)at (3,-3) {\color{blue}$0$};

      \node [fill = white, draw](a50)at (-4,-4) {\color{blue}$1$};
      \node [fill = white, draw](a51)at (-2,-4) {\color{blue}$11$};
      \node [fill = white, draw](a52)at (0,-4) {\color{blue}$11$};
      \node [fill = white, draw](a53)at (2,-4) {\color{blue}$1$};
      \node [fill = white, draw](a54)at (4,-4) {\color{blue}$0$};

      \path (a10) edge node [left] {$1$}(a20);
      \path (a10) edge node [right] {$0$} (a21);

      \path (a20) edge node [left] {$1$}(a30);
      \path (a20) edge node [right] {$1$} (a31);
      \path (a21) edge node [left] {$2$}(a31);
      \path (a21) edge node [right] {$0$} (a32);

      \path (a30) edge node [left] {$1$}(a40);
      \path (a30) edge node [right] {$2$} (a41);
      \path (a31) edge node [left] {$2$}(a41);
      \path (a31) edge node [right] {$1$} (a42);
      \path (a32) edge node [left] {$3$}(a42);
      \path (a32) edge node [right] {$0$} (a43);

      \path (a40) edge node [left] {$1$}(a50);
      \path (a40) edge node [right] {$3$} (a51);
      \path (a41) edge node [left] {$2$}(a51);
      \path (a41) edge node [right] {$2$} (a52);
      \path (a42) edge node [left] {$3$}(a52);
      \path (a42) edge node [right] {$1$} (a53);
      \path (a43) edge node [right] {$4$} (a53);
      \path (a43) edge node [right] {$0$} (a54);

      \draw rectangle (0,0) + (1,1);

     \foreach \n in {0,1,2,3,4}{
     \draw (-4.8,-\n) node {$n = \n$};
     }

     \begin{scope}[xshift = 9cm]
      \draw (0,1) node {$p = 2$};
      \draw (-.0,-4.5) node {MacMahon numbers};
      \node [draw,fill = white] (a10)at (0,0) {\color{blue}$1$};

      \node [draw,fill = white] (a20)at (-1,-1) {\color{blue}$1$};
      \node [draw,fill = white] (a21)at (1,-1) {\color{blue}$1$};

      \node [draw,fill = white] (a30)at (-2,-2) {\color{blue}$1$};
      \node [draw,fill = white] (a31)at (0,-2) {\color{blue}$6$};
      \node [draw,fill = white] (a32)at (2,-2) {\color{blue}$1$};

      \node [draw,fill = white] (a40)at (-3,-3) {\color{blue}$1$};
      \node [draw,fill = white] (a41)at (-1,-3) {\color{blue}$23$};
      \node [draw,fill = white] (a42)at (1,-3) {\color{blue}$23$};
      \node [draw,fill = white] (a43)at (3,-3) {\color{blue}$1$};

      \node [draw,fill = white] (a50)at (-4,-4) {\color{blue}$1$};
      \node [draw,fill = white] (a51)at (-2,-4) {\color{blue}$76$};
      \node [draw,fill = white] (a52)at (0,-4) {\color{blue}$230$};
      \node [draw,fill = white] (a53)at (2,-4) {\color{blue}$76$};
      \node [draw,fill = white] (a54)at (4,-4) {\color{blue}$1$};

      \path (a10) edge node [left] {$1$}(a20);
      \path (a10) edge node [right] {$1$} (a21);

      \path (a20) edge node [left] {$1$}(a30);
      \path (a20) edge node [right] {$3$} (a31);
      \path (a21) edge node [left] {$3$}(a31);
      \path (a21) edge node [right] {$1$} (a32);

      \path (a30) edge node [left] {$1$}(a40);
      \path (a30) edge node [right] {$5$} (a41);
      \path (a31) edge node [left] {$3$}(a41);
      \path (a31) edge node [right] {$3$} (a42);
      \path (a32) edge node [left] {$5$}(a42);
      \path (a32) edge node [right] {$1$} (a43);

      \path (a40) edge node [left] {$1$}(a50);
      \path (a40) edge node [right] {$7$} (a51);
      \path (a41) edge node [left] {$3$}(a51);
      \path (a41) edge node [right] {$5$} (a52);
      \path (a42) edge node [left] {$5$}(a52);
      \path (a42) edge node [right] {$3$} (a53);
      \path (a43) edge node [right] {$7$} (a53);
      \path (a43) edge node [right] {$1$} (a54);

      \draw rectangle (0,0) + (1,1);
     \end{scope}

     \begin{scope}[yshift = -7cm]
      \draw (0,1) node {$p = 3$};
      \foreach \n in {0,1,2,3,4}{
      \draw [very thin, gray,dashed] (-4.5,-\n) -- (12,-\n);
      }
      \node [draw,fill = white] (a10)at (0,0) {\color{blue}$1$};

      \node [draw,fill = white] (a20)at (-1,-1) {\color{blue}$1$};
      \node [draw,fill = white] (a21)at (1,-1) {\color{blue}$2$};

      \node [draw,fill = white] (a30)at (-2,-2) {\color{blue}$1$};
      \node [draw,fill = white] (a31)at (0,-2) {\color{blue}$13$};
      \node [draw,fill = white] (a32)at (2,-2) {\color{blue}$4$};

      \node [draw,fill = white] (a40)at (-3,-3) {\color{blue}$1$};
      \node [draw,fill = white] (a41)at (-1,-3) {\color{blue}$60$};
      \node [draw,fill = white] (a42)at (1,-3) {\color{blue}$93$};
      \node [draw,fill = white] (a43)at (3,-3) {\color{blue}$8$};

      \node [draw,fill = white] (a50)at (-4,-4) {\color{blue}$1$};
      \node [draw,fill = white] (a51)at (-2,-4) {\color{blue}$251$};
      \node [draw,fill = white] (a52)at (0,-4) {\color{blue}$1131$};
      \node [draw,fill = white] (a53)at (2,-4) {\color{blue}$545$};
      \node [draw,fill = white] (a54)at (4,-4) {\color{blue}$16$};

      \path (a10) edge node [left] {$1$}(a20);
      \path (a10) edge node [right] {$2$} (a21);

      \path (a20) edge node [left] {$1$}(a30);
      \path (a20) edge node [right] {$5$} (a31);
      \path (a21) edge node [left] {$4$}(a31);
      \path (a21) edge node [right] {$2$} (a32);

      \path (a30) edge node [left] {$1$}(a40);
      \path (a30) edge node [right] {$8$} (a41);
      \path (a31) edge node [left] {$4$}(a41);
      \path (a31) edge node [right] {$5$} (a42);
      \path (a32) edge node [left] {$7$}(a42);
      \path (a32) edge node [right] {$2$} (a43);

      \path (a40) edge node [left] {$1$}(a50);
      \path (a40) edge node [right] {$11$} (a51);
      \path (a41) edge node [left] {$4$}(a51);
      \path (a41) edge node [right] {$8$} (a52);
      \path (a42) edge node [left] {$7$}(a52);
      \path (a42) edge node [right] {$5$} (a53);
      \path (a43) edge node [right] {$10$} (a53);
      \path (a43) edge node [right] {$2$} (a54);

      \draw rectangle (0,0) + (1,1);

     \foreach \n in {0,1,2,3,4}{
     \draw (-4.8,-\n) node {$n = \n$};
     }

      \begin{scope}[xshift = 9cm]
       \draw (0,1) node {$p = \frac{5}{3}$};
       \node [draw,fill = white] (a10)at (0,0) {\color{blue}$1$};

       \node [draw,fill = white]  (a20)at (-1,-1) {\color{blue}$1$};
       \node [draw,fill = white]  (a21)at (1,-1) {\color{blue}$2/3$};

       \node [draw,fill = white] (a30)at (-2,-2) {\color{blue}$1$};
       \node [draw,fill = white] (a31)at (0,-2) {\color{blue}$37/9$};
       \node [draw,fill = white] (a32)at (2,-2) {\color{blue}$4/9$};

       \node [draw,fill = white] (a40)at (-3,-3) {\color{blue}$1$};
       \node [draw,fill = white] (a41)at (-1,-3) {\color{blue}$404/27$};
       \node [draw,fill = white] (a42)at (1,-3) {\color{blue}$311/27$};
       \node [draw,fill = white] (a43)at (3,-3) {\color{blue}$8/27$};

       \node [draw,fill = white] (a50)at (-4,-4) {\color{blue}$1$};
       \node [draw,fill = white] (a51)at (-2,-4) {\color{blue}$3691/81$};
       \node [draw,fill = white] (a52)at (0,-4) {\color{blue}$8891/81$};
       \node [draw,fill = white] (a53)at (2,-4) {\color{blue}$2321/81$};
       \node [draw,fill = white] (a54)at (4,-4) {\color{blue}$16/81$};

       \path (a10) edge node [left] {$1$}(a20);
       \path (a10) edge node [right] {\small$\frac{2}{3}$} (a21);

       \path (a20) edge node [left] {$1$}(a30);
       \path (a20) edge node [left] {\small$\frac{7}{3}$} (a31);
       \path (a21) edge node [left] {\small$\frac{8}{3}$}(a31);
       \path (a21) edge node [right] {\small$\frac{2}{3}$} (a32);

       \path (a30) edge node [left] {$1$}(a40);
       \path (a30) edge node [right] {$4$} (a41);
       \path (a31) edge node [left] {\small$\frac{8}{3}$}(a41);
       \path (a31) edge node [right] {$\frac{7}{3}$} (a42);
       \path (a32) edge node [left] {\small$\frac{13}{3}$}(a42);
       \path (a32) edge node [right] {\small$\frac{2}{3}$} (a43);

       \path (a40) edge node [left] {$1$}(a50);
       \path (a40) edge node [right] {\small$\frac{17}{3}$} (a51);
       \path (a41) edge node [left] {\small$\frac{8}{3}$}(a51);
       \path (a41) edge node [right] {$4$} (a52);
       \path (a42) edge node [left] {\small$\frac{13}{3}$}(a52);
       \path (a42) edge node [right]  {\small$\frac{7}{3}$} (a53);
       \path (a43) edge node [right] {$6$} (a53);
       \path (a43) edge node [right] {\small$\frac{2}{3}$} (a54);

       \draw rectangle (0,0) + (1,1);
      \end{scope}
     \end{scope}

  \end{tikzpicture}

  \caption{Arrays of generalized Eulerian numbers
  for $p=1$ (upper left),   $p = 2$ (upper right), $p = 3$ (lower left), and $p = \frac{5}{3}$ (lower right)}
  \label{fig:gentri}
 \end{center}
\end{figure}

 The probability density function of $S_3$ described in
 Theorem $\ref{cor:sumeuler}$ is
 \[
  f(x) = 
  \begin{cases}
   x^2/2 &  \mbox{~~if~~} 0\leq x < 1,\\
   -\left(x - \frac{3}{2}\right)^2 + \frac{3}{4} & \mbox{~~if~~} 1\leq x < 2,\\
   (x - 3)^2/2 & \mbox{~~if~~} 2\leq x \leq 3,\\
   0 & \mbox{ otherwise}.
  \end{cases}
 \]

 \begin{figure}[H]
  \begin{center}
   \begin{tikzpicture}[scale = 3]

    \fill [fill = red!20, domain = 0:0.6, smooth] plot (\x, {\x*\x/2}) -- (0.6,0) -- (0,0) -- cycle;
    \fill [fill = green!20, domain = 0.6:1, smooth] plot (\x, {\x*\x/2}) -- (1,0) -- (0.6,0) -- cycle;
    \fill [fill = green!20, domain = 1:1.6, smooth] plot (\x, {(1.5 - \x)*(\x - 1.5) + 0.75}) -- (1.6,0) -- (1,0) -- cycle;

    \fill [fill = blue!20, domain = 1.6:2, smooth] plot (\x, {(1.5 - \x)*(\x - 1.5) + 0.75}) -- (2,0) -- (1.6,0) -- cycle;
    \fill [fill = blue!20, domain = 2:2.6, smooth] plot (\x, {(\x - 3)*(\x - 3)/2}) -- (2.6,0) -- (2,0) -- cycle;
    \fill [fill = yellow!20, domain = 2.6:3, smooth] plot (\x, {(\x - 3)*(\x - 3)/2}) -- (3,0) -- (2.6,0) -- cycle;

    \draw (0.6,0)  node[below] {$\frac{1}{p}$};
    \draw (1.6,0)  node[below] {$1 + \frac{1}{p}$};
    \draw (2.6,0)  node[below] {$2 + \frac{1}{p}$};
    \draw (1,0) node[below] {$1$};
    \draw (2,0) node[below] {$2$};
    \draw (3,0) node[below] {$3$};

    \pgfsetarrowsend{latex}
    \draw (-0.3,0) -- (4,0);
    \draw (0,-0.2) -- (0,1.2);
    \pgfsetarrowsend{}

    \draw [color = red, domain = 0:1, smooth, line width = 1] plot (\x, {\x*\x/2});
    \draw [color = red, domain = 1:2, smooth, line width = 1] plot (\x, {(1.5 - \x)*(\x - 1.5) + 0.75});
    \draw [color = red, domain = 2:3, smooth, line width = 1] plot (\x, {(\x - 3)*(\x - 3)/2});
    \draw [line width = 0.01,dashed] (1,0) -- (1,0.5);
    \draw [line width = 0.01,dashed] (2,0) -- (2,0.5);
   \end{tikzpicture}
  \end{center}
 \end{figure}

 \begin{table}[H]
  \begin{center}
   \begin{tabular}{c|cccc}
    $p$ & $\pr(S_3\in 1/p + [-1,0] )$ & $\pr(S_3\in 1/p + [0,1])$ & $\pr(S_3\in 1/p + [1,2])$ & 
    $\pr(S_3\in 1/p + [2,3])$\\
    \hline
    \\
    1 & $\frac{1}{6}$ & $\frac{4}{6}$ & $\frac{1}{6}$ & 0\\
    \\
    2 & $\frac{1}{48}$ & $\frac{23}{48}$ & $\frac{23}{48}$ & $\frac{1}{48}$\\
    \\
    3 & $\frac{1}{162}$ & $\frac{60}{162}$ & $\frac{93}{162}$ & $\frac{8}{162}$  \\
    \\
    ${5}/{3}$ & $\frac{27}{750}$ & $\frac{404}{750}$ & $\frac{311}{750}$ & $\frac{8}{750}$
   \end{tabular}
  \end{center}
 \end{table}
 The probability vectors appear in the third rows
 of the triangles of the generalized Eulerian numbers
 (Figure $\ref{fig:gentri}$).
\end{exm}

\section{Proof}

\subsection{State space and transition probability}

 \begin{lem}\label{lem:statespace}
  Let $\Omega  = \Omega_n(b,{\mathcal D})$ be the state space of the
  $n$-carry process over the numeration system 
  $(b,{\mathcal D})$.
  Then, $\Omega = \{s,s+1, \ldots,t\}$ with
  \[
   s = -\lceil(n - 1)(- l)\rceil = \lfloor(n - 1)l\rfloor,
   ~~
   t = \lceil (n - 1)(l + 1) \rceil,
  \]
  where $l = d/(b - 1)$.
  Therefore, the size of the state space $\Omega$ is
  \[
   \#\Omega =
  \begin{cases}
   n + 1 & (n - 1)l \not\in {\mathbb Z},\\
   n &  (n - 1)l \in {\mathbb Z}.
  \end{cases}
  \]
 \end{lem}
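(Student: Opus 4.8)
The plan is to reduce the stochastic recursion to a deterministic floor formula and then pin down the reachable states by an invariance-plus-connectivity argument. First I would observe that if $\Sigma_i = X_{1,i} + \cdots + X_{n,i}$ and $S = C_i + \Sigma_i$, then the digit $A_i$ is the unique representative of $S$ modulo $b$ lying in ${\mathcal D} = \{d,\ldots,d+b-1\}$, so $A_i = d + ((S-d)\bmod b)$ and hence
\[
 C_{i+1} = \frac{S - A_i}{b} = \left\lfloor \frac{C_i + \Sigma_i - d}{b}\right\rfloor .
\]
Since each $X_{j,i}$ is uniform on the $b$ consecutive integers of ${\mathcal D}$, the sum $\Sigma_i$ attains, with positive probability, every integer in $[nd,\,n(d+b-1)]$. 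Consequently, from a state $c$ the possible next carries are exactly the integers in $[g(c),h(c)]$, where $g(c) = \lfloor (c+(n-1)d)/b\rfloor$ and $h(c) = \lfloor (c+(n-1)d+n(b-1))/b\rfloor$ come from taking all digits minimal resp.\ maximal; every value in between is hit because $\lfloor\cdot/b\rfloor$ applied to a run of consecutive integers changes by at most $1$ per step, and so sweeps out every integer from $g(c)$ to $h(c)$.

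Next I would prove the containment $\Omega \subseteq \{s,\ldots,t\}$ by induction from $C_0 = 0$, which lies in $[s,t]$ since $l\in[-1,0]$ forces $s\le 0\le t$. The two arithmetic facts needed are $g(s)\ge s$ and $h(t)\le t$; unwinding the floors, $g(s)\ge s \iff (n-1)l \ge \lfloor(n-1)l\rfloor = s$, and $h(t)\le t \iff (n-1)(l+1) < t + \tfrac{1}{b-1}$ with $t=\lceil(n-1)(l+1)\rceil$, both immediate. Together with the monotonicity of $g$ and $h$, this shows $\{s,\ldots,t\}$ is forward-invariant for the chain.

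For the reverse inclusion I would show $\Omega$ has no gaps by guaranteeing single-step moves: for $s\le c< t$ one has $g(c)\le c+1\le h(c)$, and for $s< c\le t$ one has $g(c)\le c-1\le h(c)$, so starting from $C_0=0$ the chain can climb to $t$ and descend to $s$ one unit at a time, reaching every state of $\{s,\ldots,t\}$. The only delicate inequality is $c+1\le h(c)$ for $c\le t-1$, which reduces to $t-1 \le (n-1)(l+1) - \tfrac{1}{b-1}$; this is the crux. It holds because when $(n-1)(l+1)$ is not an integer its fractional part equals $\{(n-1)d/(b-1)\}$, a nonzero multiple of $\tfrac{1}{b-1}$ and hence at least $\tfrac{1}{b-1}$ — exactly the slack required. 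I expect this fractional-part estimate to be the main obstacle; the remaining inequalities ($g(c)\le c+1$ for $c\ge s$, $h(c)\ge c$ for $c\le t$, and $g(c)\le c-1$ for $c>s$) are routine consequences of $d\le 0$ and the definitions of $s,t$.

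Finally, the cardinality follows by writing $\alpha = (n-1)l$, so that $t = \lceil \alpha + (n-1)\rceil = (n-1) + \lceil\alpha\rceil$ and $s = \lfloor\alpha\rfloor$, whence $\#\Omega = t-s+1 = n + (\lceil\alpha\rceil - \lfloor\alpha\rfloor)$, which is $n+1$ when $(n-1)l\notin\mathbb{Z}$ and $n$ when $(n-1)l\in\mathbb{Z}$. The stated identity $s = -\lceil(n-1)(-l)\rceil = \lfloor(n-1)l\rfloor$ is merely $\lfloor x\rfloor = -\lceil -x\rceil$.
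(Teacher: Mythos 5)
Your proposal is correct, but it proves the lemma by a genuinely different route than the paper. The paper works globally: it introduces the set $F$ of finite base-$b$ fractions with digits in ${\mathcal D}$, notes that $F$ is dense in $(l,l+1)$, rewrites the condition $c_i=c$ as an identity among elements of $F$ to conclude $c\in nF-F\subset((n-1)l-1,(n-1)(l+1)+1)$, and then asserts the converse inclusion via $c+F\subset nF$; the endpoints $s,t$ are read off as the extreme integers in that open interval. You instead work locally with the one-step map, reducing the recursion to $C_{i+1}=\lfloor(C_i+\Sigma_i-d)/b\rfloor$, identifying the exact one-step reachable interval $[g(c),h(c)]$, and then proving forward invariance of $\{s,\ldots,t\}$ plus connectivity by $\pm1$ moves. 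Your computations check out: the equivalences $g(s)\ge s\iff(n-1)l\ge\lfloor(n-1)l\rfloor$ and $h(t)\le t\iff(n-1)(l+1)<t+\tfrac{1}{b-1}$ are right, and the crux inequality $t-1\le(n-1)(l+1)-\tfrac{1}{b-1}$ does follow from the fractional part of $(n-1)d/(b-1)$ being a nonzero multiple of $\tfrac{1}{b-1}$ in the non-integral case (and trivially in the integral case). What your approach buys is that the ``conversely'' direction, which the paper dispatches in one sentence (the claim $c+F\subset nF$ is not really justified there), is made fully explicit and elementary; what it costs is the slicker geometric picture of the fundamental domain that the paper reuses in its concluding remarks. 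Both arrive at the same $s$, $t$, and cardinality count.
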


\begin{proof}
 Suppose that we add $n$ numbers,
 \[
 (x_{1,\nu}x_{1,\nu - 1}\cdots x_{1,1}x_{1,0})_b,
 (x_{2,\nu}x_{2,\nu - 1}\cdots x_{2,1}x_{2,0})_b,\ldots,
 (x_{n,\nu}x_{n,\nu - 1}\cdots x_{n,1}x_{n,0})_b,
 \]
 and get the sum $(a_{\nu + 1},a_{\nu},\ldots, a_1, a_0)_b$ and the carries $c_0, c_1, c_2, \ldots$.
 That is, $c_0 = 0$ and
 \[
  c_{i + 1} = \frac{c_i + x_{1,i} + \cdots + x_{n,i} - a_i}{b},
 \]
 where $a_i \in {\mathcal D}$ and $a_i \equiv c_i + x_{1,i} + \cdots + x_{n,i}\pmod{ b}$.
  Let $F$ be defined by
  \[
   F = \{x_1b^{-1} + x_2b^{-2} + \cdots + x_mb^{-m}\,|\,
 x_i \in {\mathcal D}_b, m > 0 \in {\mathbb Z}\}.
  \]
  Then, $F$ is a dense subset of the
 interval $(l, l+1)$, where $l = d/(b - 1)$.
  \begin{eqnarray*}
   c_i = c & \Longleftrightarrow &
    (x_{1,i-1}\cdots x_{1,0})_{b} + \cdots + (x_{n,i-1}\cdots x_{n,0})_{b}
    = cb^i + (a_{i-1}\cdots a_0)_{b}\\
   & \Longleftrightarrow &
    \frac{(x_{1,i-1}\cdots x_{1,0})_{b}}{b^i} + \cdots + 
    \frac{(x_{n,i-1}\cdots x_{n,0})_{b}}{b^i}
    = c + \frac{(a_{i-1}\cdots a_0)_{b}}{b^i}\\
  \end{eqnarray*}
  Since $\frac{(x^1_{i-1}\cdots x^1_{0})_{b}}{b^i}, \ldots,  
  \frac{(x^n_{i-1}\cdots x^n_{0})_{b}}{b^i},
  \frac{(a_{i-1}\cdots a_0)_{b}}{b^i} \in F$,
  we have $c\in nF - F \subset ((n - 1)l - 1, (n - 1)(l + 1) + 1)$.
 Conversely, if $c \in ((n - 1)l - 1, (n - 1)(l + 1) + 1)\cap {\mathbb Z}$,
 then  $c + F \subset nF$.
  Therefore $s$ is the smallest integer strictly greater than
  $(n - 1)l - 1$ and $t$ is the greatest integer
  strictly smaller than $(n - 1)(l + 1) + 1$, that is,
 \[
 s = -\left\lceil(n - 1)(-l)\right\rceil,~~~
 t = \left\lceil(n - 1)(l + 1)\right\rceil.
 \]
\end{proof}

 \begin{thm}\label{thm:transp}
  \[
   p_{i,j}
  =
  \frac{1}{b^n}
  \sum_{k = 0}^{j - \left\lfloor\frac{d(n - 1) + i }{b}\right\rfloor}
  (-1)^k{n + 1 \choose k}
  {n + b(j + 1 - k) - d(n - 1) - i - 1 \choose n},
  ~~~~~i,j\in \Omega_n(b,{\mathcal D}).
  \]
 \end{thm}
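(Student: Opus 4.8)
The plan is to compute $p_{i,j}$ directly from the defining recursion of the carries process and to reduce it to a bounded–composition counting problem. Write $S = X_{1} + \cdots + X_{n}$ for the random sum of the $n$ digits. Given the current carry $C = i$, the new digit $A$ is the unique element of ${\mathcal D} = \{d,\ldots,d+b-1\}$ congruent to $i + S$ modulo $b$, so $A = (i+S) - b\left\lfloor (i+S-d)/b\right\rfloor$ and hence the new carry equals $\left\lfloor (i+S-d)/b\right\rfloor$. Consequently the event $\{C_{\mathrm{next}} = j\}$ is exactly $\{bj \le i+S-d < b(j+1)\}$, i.e.\ $\{bj + d - i \le S \le bj + d - i + b - 1\}$. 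The first step is to make this identification precise, being careful that it is the offset by $d$ that adapts Holte's standard-digit computation to the non-standard digit set.

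Next I would normalize the summands. Setting $Y_{k} = X_{k} - d$, each $Y_{k}$ is uniform on $\{0,1,\ldots,b-1\}$ and $S = T + nd$ with $T = Y_{1} + \cdots + Y_{n}$. Substituting turns the event into $\{L \le T \le L + b - 1\}$ with $L = bj - i - d(n-1)$, so that $p_{i,j} = b^{-n}\,\#\{(Y_{1},\ldots,Y_{n}) \in \{0,\ldots,b-1\}^{n} : L \le \sum_{k} Y_{k} \le L+b-1\}$. The key observation is that this count over an interval of length exactly $b$ collapses to a single linear equation upon introducing an auxiliary variable $Y_{0} \in \{0,\ldots,b-1\}$: for each admissible tuple there is precisely one $Y_{0}$ with $Y_{0} + \sum_{k=1}^{n} Y_{k} = L+b-1$, so the count equals the number of solutions of $\sum_{k=0}^{n} Y_{k} = L+b-1$ with every $Y_{k} \in \{0,\ldots,b-1\}$. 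This is exactly where the binomial coefficient $\binom{n+1}{k}$, with its $n+1$ parts, will originate.

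The third step is the standard inclusion–exclusion count for bounded compositions: the number of solutions of $\sum_{k=0}^{n} Y_{k} = s$ with $0 \le Y_{k} \le b-1$ equals $\sum_{k} (-1)^{k}\binom{n+1}{k}\binom{s - bk + n}{n}$, read off from the generating function $\left(\frac{1-x^{b}}{1-x}\right)^{n+1}$ under the convention $\binom{m}{n} = 0$ for $m < n$. Taking $s = L + b - 1$ and simplifying $s - bk + n = n + b(j+1-k) - d(n-1) - i - 1$ yields the displayed formula.

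Finally I would pin down the upper summation limit. The $k$-th term vanishes unless $n + b(j+1-k) - d(n-1) - i - 1 \ge n$, i.e.\ $k \le j + 1 - \frac{d(n-1)+i+1}{b}$; since $k$ is an integer and, for any integer $y$ and positive integer $b$, one has $\lceil (y+1)/b\rceil = \lfloor y/b\rfloor + 1$, this is equivalent to $k \le j - \left\lfloor (d(n-1)+i)/b\right\rfloor$, which is the stated range. I expect the main obstacle to be bookkeeping rather than depth: getting the $d$-offsets in the floor expression for the new carry exactly right, and checking that the auxiliary-variable reduction together with the binomial convention reproduces the precise summation limits. The generating-function identity itself is routine.
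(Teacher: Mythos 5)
Your proposal is correct and follows essentially the same route as the paper: both reduce $p_{i,j}$ to counting compositions of a single integer into $n+1$ parts each confined to an interval of length $b$, evaluate that count via the generating function $\left(\frac{1-x^{b}}{1-x}\right)^{n+1}$, and fix the upper summation limit by noting when the binomial coefficient vanishes. The only cosmetic difference is that you normalize the digits to $\{0,\ldots,b-1\}$ and introduce a slack variable $Y_0$, whereas the paper keeps the digits in ${\mathcal D}$ and uses the output digit $a$ (suitably reflected) as the $(n+1)$-st part.
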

\begin{proof}
 This proof is essentially the same as that of Holte \cite{Holte} for the case when ${\mathcal D} = \{0,1,\ldots,b - 1\}$.
 $p_{i,j}$ is the probability of  $C_{k + 1} = j$
 under  $C_k = i$ for some $k > 0$.
 $C_{k + 1} = j$ and $C_{k} = i$ implies
 there exists a number $a\in{\mathcal D}$, such that,
 \[
 j = \frac{i + X_{1,k} + X_{2,k} + \cdots + X_{n,k} - a}{b}.
 \]
 We count the number $N$ of the solutions $(x_1, x_2, \ldots, x_n, a)\in{\mathcal D}^{n + 1}$
 of the equation
 \[
 bj + a = i + x_1 + x_2 + \cdots + x_n.
 \]
 This is equal to the number of solutions $(x_1, x_2, \ldots, x_n, y)\in{\mathcal D}^{n}\times\{0,1,\ldots,b - 1\}$
 for the equation
 \[
  bj + d + b - 1 - i = x_1 + x_2 + \cdots + x_n + y.
 \]
 By adding $d$ to the both sides, $N$ is equal to
 the number of solutions $(x_1, \ldots, x_n, z)\in{\mathcal D}^{n + 1}$
 for 
 \[
  b(j + 1) + 2d - 1 - i = x_1 + x_2 + \cdots + x_n + z.
 \]
 Thus, $N$ is the coefficient of $x^{b(j + 1) + 2d - 1 - i}$ in
 $  (x^{d} + x^{d + 1} + \cdots + x^{d + b - 1})^{n + 1}$.
 Since
 \begin{eqnarray*}
  (x^{d} + x^{d + 1} + \cdots + x^{d + b - 1})^{n + 1} 
   & = &
   \left\{
    x^{d}(1 + x + \cdots + x^{b - 1})
       \right\}^{n + 1}\\
  & = & 
   x^{d(n+1)}
   \left(
    \frac{1 - x^{b}}{1 -x}
   \right)^{n + 1}\\
   & = &
   x^{d(n+1)}
   \left(
   \sum_{k = 0}^{n+1}
   (-1)^k{n + 1 \choose k}
   x^{bk}
   \right)
   \left(
    \sum_{r = 0}^\infty
    {n + r\choose n}x^r
   \right),
 \end{eqnarray*}
 we have
 \begin{eqnarray*}
  N & = & 
   \sum_{\tiny
 \begin{array}{c}
  r,k\geq 0\\
  d(n + 1) + bk + r = b(j + 1) + 2d - 1 - i
 \end{array}
 }
 (-1)^k{n + 1 \choose k}
 {n + r\choose n}x^r\\
  & = &
   \sum_{\tiny
   \begin{array}{c}
    k\geq 0\\
    b(j + 1) + 2d - 1 - i - d(n + 1) - bk \geq 0
   \end{array}
   }
   (-1)^k{n + 1 \choose k}
   {n + b(j + 1) + 2d - 1 - i - d(n + 1) - bk\choose n}x^r\\
  & = &
   \sum_{k = 0}
    ^{
    j + 1 + \left\lfloor - \frac{1 + i + d(n - 1)}{b}\right\rfloor
    }
   (-1)^k{n + 1 \choose k}
   {n + b(j + 1) - 1 - i - d(n - 1) - bk\choose n}x^r.
 \end{eqnarray*}
 $j + 1 + \left\lfloor - \frac{1 + i + d(n - 1)}{b}\right\rfloor
 =
 j - \left\lfloor \frac{ i + d(n - 1)}{b}\right\rfloor,$
 $p_{i,j} = \frac{N}{b^n}$ and the theorem follows.
\end{proof}

\begin{lem}\label{lem:standardp}
 Let $P = (\tilde{p}_{i,j})$ be the matrix defined by $(\ref{eq:amazingmat})$ and
 let $p$ be defined by $(\ref{eq:mainth})$.
 Then,
 \begin{equation}\label{eq:pij}
 \tilde{p}_{i,j} =
 \frac{1}{b^n}\sum_{r = 0}^j
 (-1)^r
 {n + 1 \choose r}
 {{n + b(j - r) +\frac{b - 1}{p} - i}
 \choose
 n}.
 \end{equation}
\end{lem}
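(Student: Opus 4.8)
The plan is to derive \eqref{eq:pij} directly from Theorem~\ref{thm:transp} by the change of variables $\tilde p_{i,j}=p_{i+s,j+s}$, where $s=\lfloor(n-1)l\rfloor$ is the minimal element of $\Omega$ furnished by Lemma~\ref{lem:statespace}. Substituting $i\mapsto i+s$ and $j\mapsto j+s$ into the formula of Theorem~\ref{thm:transp} produces an expression of exactly the shape of \eqref{eq:pij}, namely $b^{-n}$ times an alternating sum of $(-1)^r\binom{n+1}{r}$ against a single top binomial coefficient. Two things then have to be checked: that the top arguments agree, and that the two summation ranges may be taken equal.

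First I would match the top arguments. Expanding after the substitution,
\[
 n+b\big((j+s)+1-r\big)-d(n-1)-(i+s)-1
 = n+b(j-r)-i+\big[(b-1)(s+1)-d(n-1)\big],
\]
so it suffices to prove the numerical identity $(b-1)(s+1)-d(n-1)=(b-1)/p$; dividing by $b-1$ and using $d(n-1)=(b-1)(n-1)l$ this is $s+1-(n-1)l=1/p$. I would verify it from the definition \eqref{eq:mainth} of $p$ in the two cases. If $(n-1)l\in\mathbb Z$ then $s=(n-1)l$, so the left side is $1=1/p$. If $(n-1)l\notin\mathbb Z$ then $s+1-(n-1)l=1-\{(n-1)l\}$, while $1/p=\{(n-1)(-l)\}=1-\{(n-1)l\}$ by the elementary identity $\{-x\}=1-\{x\}$ for $x\notin\mathbb Z$. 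In particular $(b-1)/p=(b-1)(s+1)-d(n-1)$ is a non-negative integer, so the top coefficient in \eqref{eq:pij} has integer argument and $R:=b(j-r)+(b-1)/p-i\in\mathbb Z$.

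It remains to reconcile the summation limits, which I expect to be the only real obstacle. Theorem~\ref{thm:transp} stops the sum at $U=(j+s)-\lfloor(d(n-1)+i+s)/b\rfloor$, whereas \eqref{eq:pij} stops at $j$. I would first show $U\le j$: since $s\le(n-1)l$ we have $(b-1)s\le(b-1)(n-1)l=d(n-1)$, whence $(d(n-1)+i+s)/b\ge s$ and $\lfloor(d(n-1)+i+s)/b\rfloor\ge s$, giving $U\le j$. I would then show that the surplus terms $U<r\le j$ all vanish. By the derivation of Theorem~\ref{thm:transp}, $U$ is precisely the largest index for which the top argument $n+R$ is at least $n$; since $R$ is strictly decreasing in $r$, every $r$ with $U<r\le j$ has $R<0$, hence $R\le-1$. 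On the other hand $R\ge R|_{r=j}=(b-1)/p-i\ge-n$, because $(b-1)/p\ge0$ and $i\le m-1\le n$. Thus $0\le n+R\le n-1$ for each surplus index, so $\binom{n+R}{n}=0$ there. Extending the sum of Theorem~\ref{thm:transp} up to $r=j$ therefore does not change its value and yields exactly \eqref{eq:pij}, which proves the lemma.
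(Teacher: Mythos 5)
Your proof is correct and follows essentially the same route as the paper's: substitute $i\mapsto i+s$, $j\mapsto j+s$ into Theorem~\ref{thm:transp}, use $s=\lfloor(n-1)l\rfloor$ together with the definition of $p$ to identify the shifted top argument with $n+b(j-r)+\frac{b-1}{p}-i$, and then observe that the upper summation limit may be replaced by $j$. If anything, you are more careful than the paper on the last step, since you explicitly verify both that the surplus indices $U<r\le j$ satisfy $0\le n+R\le n-1$ (so their binomial coefficients vanish, rather than being nonzero values of $\binom{\cdot}{n}$ with negative top) and that $U\le j$, points the paper passes over quickly.
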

\begin{proof}
Let $s$ be the minimal element of the state
space  $\Omega_n(b,{\mathcal D})$ of the $n$-carry process.
Then, recall that
\[
 s = -\left\lceil(n - 1)\frac{-d}{b - 1}\right\rceil 
 = \left\lfloor(n - 1)\frac{d}{b - 1}\right\rfloor.
\]
Therefore,
 \[
 \tilde{p}_{i,j} =
 p_{i + s, j + s}
 =
  \frac{1}{b^n}
  \sum_{r \geq 0}
  (-1)^r{n + 1 \choose r}
  {n + b(j + s + 1 - r) - d(n - 1) - i - s - 1 \choose n}
 \]
 \begin{eqnarray*}
  & &
   n + b(j + s + 1 - r) - d(n - 1) - i - s - 1\\
  & = &
   n + (b - 1)s + b(j + 1 - r) - d(n - 1) - i  - 1\\
  & = &
   n + (b - 1)\left\lfloor(n - 1)\frac{-d}{b - 1}\right\rfloor
   + b(j + 1 - r) - d(n - 1) - i  - 1\\
  & = &
   n + (b - 1)
   \frac{(n - 1)d - (n - 1)d\pmod{b - 1}}{b - 1}
   + b(j + 1 - r) - d(n - 1) - i  - 1\\
  & = &
   n  + (b - 1) - (n - 1)d\pmod{b - 1}  + b(j - r) - i \\
  & = &
   n  + (b - 1)\frac{(b - 1) - (n - 1)d\pmod{b - 1}}{b - 1}  + b(j - r) - i \\
  & = &
   \begin{cases}
    n  + (b - 1)\left\{\frac{(n - 1)(-d)}{b - 1}\right\}  + b(j - r) - i 
    & \frac{(n - 1)(-d)}{b - 1}\in {\mathbb Z},
    \\
    n  + (b - 1) + b(j - r) - i 
    & \frac{(n - 1)(-d)}{b - 1}\not\in {\mathbb Z}.
   \end{cases}
 \end{eqnarray*}
 Here, $x\pmod{N}$ denotes the integer $y \in \{0,1,\ldots,N - 1\}$
 such that $x - y \in N{\mathbb Z}$.
 Then, we calculate a common upper bound of the range of the
 summation in $(\ref{eq:pij})$:
 \[
  n + b(j - r) + \frac{b -1}{p} - i \geq n
 \Longleftrightarrow
 r \leq j + \frac{b - 1}{pb} - \frac{i}{p}.
 \]
 Since $p\geq 1$ and $b > 1$,
 $j + \frac{b - 1}{pb} - \frac{i}{p} \leq j$.
\end{proof}

\subsection{Generalized Eulerian numbers}

\begin{lem}\label{lem:lastzero}
 \[
  v_{i,n + 1}^{(p)}(n) = 0.
 \]
\end{lem}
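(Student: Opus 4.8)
The plan is to substitute $j = n+1$ directly into the defining formula (\ref{eq:gedef}) and to recognize the resulting alternating binomial sum as a top-order finite difference of a polynomial whose degree is too small. First I would write out
\[
 v_{i,n+1}^{(p)}(n) = \sum_{r=0}^{n+1} (-1)^r \binom{n+1}{r}\, [p(n+1-r)+1]^{n-i},
\]
where the upper limit is exactly $n+1$ because the factor $\binom{n+1}{r}$ already vanishes for $r > n+1$. The natural next move is to reindex by $s = n+1-r$, using $\binom{n+1}{r} = \binom{n+1}{s}$ and $(-1)^r = (-1)^{n+1-s}$, which converts the sum into
\[
 v_{i,n+1}^{(p)}(n) = \sum_{s=0}^{n+1} (-1)^{\,n+1-s} \binom{n+1}{s}\, (ps+1)^{n-i}.
\]

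The key observation is that the right-hand side is precisely the $(n+1)$-st forward finite difference $\Delta^{n+1} f$ evaluated at $0$, where $f(s) = (ps+1)^{n-i}$ is a polynomial in $s$ of degree exactly $n-i$ (here $p\geq 1$, so the leading coefficient does not degenerate). Since the array $v_{i,j}^{(p)}(n)$ is defined for $i = 0,1,\ldots,n$, we always have $n-i \leq n < n+1$, so $f$ has degree strictly less than $n+1$. By the standard fact that the $m$-th finite difference annihilates every polynomial of degree less than $m$, we obtain $\Delta^{n+1} f(0) = 0$, which is exactly the claim $v_{i,n+1}^{(p)}(n) = 0$.

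I do not anticipate any serious obstacle: the only point needing care is matching the sign of the reindexed sum to the finite-difference operator and confirming the degree bound $n-i < n+1$ over the relevant range of $i$. If one prefers not to invoke the finite-difference vanishing as a black box, the identity $\sum_{s=0}^{n+1} (-1)^s \binom{n+1}{s} s^{k} = 0$ for $0 \leq k \leq n$ can be proved directly by applying the operator $\left(x\frac{d}{dx}\right)^k$ to $(1-x)^{n+1}$ and evaluating at $x=1$; expanding $(ps+1)^{n-i}$ in powers of $s$ and summing term by term via linearity then yields the result, with every occurring power $s^k$ satisfying $k \leq n-i \leq n < n+1$.
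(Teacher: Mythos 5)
Your proof is correct and is essentially the paper's own argument: the paper likewise expands $[p(n+1-r)+1]^{n-i}$ as a polynomial in $r$ of degree $n-i\leq n$ and invokes the identity $\sum_{r=0}^{n+1}(-1)^r\binom{n+1}{r}r^k=0$ for $k=0,\ldots,n$, which is precisely the finite-difference vanishing you use. The reindexing $s=n+1-r$ is cosmetic and changes nothing of substance.
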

\begin{proof}
 \begin{eqnarray*}
  v_{i,n + 1}^{(p)}(n) & = &
   \sum_{r = 0}^{n + 1}(-1)^r{n + 1\choose r}[p(n + 1 - r) + 1]^{n - i}\\
 \end{eqnarray*}
 This is a linear combination of 
 \[
  \sum_{r = 0}^{n + 1}(-1)^{r}{n + 1\choose r}r^k = 0,\hspace{1cm}
 k = 0,1,\ldots, n.
 \]
\end{proof}
\begin{lem}\label{lem:recurrence}
\begin{equation}
 \label{eq:recurrence}
 v_{i,j}^{(p)}(n) = 
 [p(n + 1 - j) - 1]v_{i, j - 1}^{(p)}( n - 1 )
 +
 (pj + 1)v_{i,j}^{(p)}(n-1).
\end{equation}
\end{lem}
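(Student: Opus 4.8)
The plan is to expand both sides directly from the defining formula $(\ref{eq:gedef})$ and match them term by term after a single index shift. Throughout write $a_r = p(j-r)+1$, so that $v_{i,j}^{(p)}(n) = \sum_{r=0}^{j}(-1)^r\binom{n+1}{r} a_r^{\,n-i}$ and, crucially, $a_r^{\,n-i} = a_r\, a_r^{\,(n-1)-i}$. The idea is to peel off this single factor of $a_r$ from the left-hand side and, via Pascal's rule $\binom{n+1}{r} = \binom{n}{r} + \binom{n}{r-1}$, rewrite everything as a combination of the two order-$(n-1)$ arrays appearing on the right.

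First I would rewrite the right-hand side as one sum over $r = 0,\dots,j$. The term $(pj+1)v_{i,j}^{(p)}(n-1)$ is already such a sum, with summand $(-1)^r\binom{n}{r} a_r^{\,(n-1)-i}$. For the term $[p(n+1-j)-1]\,v_{i,j-1}^{(p)}(n-1)$, observe that the summand of $v_{i,j-1}^{(p)}(n-1)$ involves $[p(j-1-r)+1]^{(n-1)-i} = a_{r+1}^{\,(n-1)-i}$; shifting the index $r \mapsto r-1$ turns this into $-\sum_{r=1}^{j}(-1)^r\binom{n}{r-1} a_r^{\,(n-1)-i}$, and since $\binom{n}{-1}=0$ the lower limit extends harmlessly to $r=0$. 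Collecting the two pieces, the right-hand side becomes
\[
 \sum_{r=0}^{j}(-1)^r a_r^{\,(n-1)-i}\Big[(pj+1)\binom{n}{r} - (p(n+1-j)-1)\binom{n}{r-1}\Big].
\]

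It then suffices to prove the purely algebraic coefficient identity
\[
 a_r\binom{n+1}{r} = (pj+1)\binom{n}{r} - (p(n+1-j)-1)\binom{n}{r-1}
\]
for every $r$, since multiplying it by $(-1)^r a_r^{\,(n-1)-i}$ and summing over $r=0,\dots,j$ reproduces exactly $v_{i,j}^{(p)}(n)$ on the left. Expanding $a_r\binom{n+1}{r}$ with Pascal's rule and subtracting the right-hand side, the coefficient of $\binom{n}{r}$ reduces to $a_r-(pj+1) = -pr$ and the coefficient of $\binom{n}{r-1}$ reduces to $a_r + p(n+1-j)-1 = p(n+1-r)$, so the net discrepancy is $-pr\binom{n}{r} + p(n+1-r)\binom{n}{r-1}$.

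The one genuine step is then the absorption identity $r\binom{n}{r} = (n+1-r)\binom{n}{r-1}$, which collapses this discrepancy to $0$ term by term. I do not expect any real obstacle here; the only points requiring care are the index shift in the $v_{i,j-1}^{(p)}(n-1)$ sum (and checking the boundary contributions at $r=0$ and $r=j$, together with the degenerate case $j=0$, where $v_{i,-1}^{(p)}(n-1)=0$ forces both sides to equal $1$) and keeping track of the extracted factor $a_r$ that converts the exponent $n-i$ into $(n-1)-i$.
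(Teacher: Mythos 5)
Your proposal is correct and follows essentially the same route as the paper's proof: expand both order-$(n-1)$ sums, shift the index in the $v_{i,j-1}^{(p)}(n-1)$ term, and collapse the combined coefficient via the identity $(pj+1)\binom{n}{r}-[p(n+1-j)-1]\binom{n}{r-1}=\binom{n+1}{r}[p(j-r)+1]$, which is exactly Pascal's rule plus the absorption identity you invoke. Your version is slightly more explicit about the boundary terms and the $j=0$ case, but there is no substantive difference.
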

\begin{proof}
 The first term $T_1$ of right hand side of $(\ref{eq:recurrence})$ can
 be rewritten as
 \[
  T_1 = \sum_{k = 1}^j(-1)^{k-1}{n \choose k - 1}
 [p(n + 1 - j) - 1][p(j - k) + 1]^{n - 1 - i},
 \]
 and the second term $T_2$
 \[
  T_2 = \sum_{k = 1}^j(-1)^k{n \choose k}(pj + 1)[p(j - k) + 1]^{n - 1 - i}
 + (pj + 1)(pj + 1)^{n - 1 - i}.
 \]
 Thus the right hand side of $(\ref{eq:recurrence})$ is
 \begin{eqnarray*}
  T_1  + T_2 & = &
   \sum_{k = 1}^j(-1)^k
   \left\{
    -{n \choose k - 1}[p(n + 1 - j) - 1]
    +{n \choose k}(pj + 1)
   \right\}
   [p(j - k) + 1]^{n - 1 -i}\\
  &  &
   \hspace{3cm} +(pj + 1)^{n - i}\\
  & = &
   \sum_{k = 1}^j(-1)^k
   {n + 1 \choose k}[p(j - k) + 1]^{n - i}
   +   (pj + 1)^{n - i}\\
  & = & v_{i,j}^{(p)}(n).
 \end{eqnarray*}
\end{proof}

\begin{lem}
 \label{lem:sumeuler}
 \[
 \sum_{j = 0}^{n}v_{i,j}^{(p)}(n)
 =
 \begin{cases}
  p^{n}n! & \mbox{ if } i = 0,\\
  0 & \mbox{ if }i > 0.
 \end{cases}
 \]
\end{lem}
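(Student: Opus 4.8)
The plan is to evaluate the iterated sum directly, interchanging the order of summation and recognizing the outcome as an $n$-th finite difference. Writing out the definition $(\ref{eq:gedef})$,
\[
\sum_{j=0}^{n} v_{i,j}^{(p)}(n) = \sum_{j=0}^{n}\sum_{r=0}^{j}(-1)^r\binom{n+1}{r}[p(j-r)+1]^{n-i},
\]
I would substitute $m = j - r$, so that the index region $\{0\le r\le j\le n\}$ becomes $\{m,r\ge 0,\ m+r\le n\}$. Summing over $r$ first, for each fixed $m$ the variable $r$ runs from $0$ to $n-m$, giving
\[
\sum_{j=0}^{n} v_{i,j}^{(p)}(n) = \sum_{m=0}^{n}[pm+1]^{n-i}\sum_{r=0}^{n-m}(-1)^r\binom{n+1}{r}.
\]

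The next ingredient is the partial-sum identity for alternating binomial coefficients, $\sum_{r=0}^{M}(-1)^r\binom{N}{r} = (-1)^M\binom{N-1}{M}$, which follows from Pascal's rule by telescoping. Applying it with $N=n+1$ and $M=n-m$ collapses the inner sum to $(-1)^{n-m}\binom{n}{m}$ (using $\binom{n}{n-m}=\binom{n}{m}$), so that
\[
\sum_{j=0}^{n} v_{i,j}^{(p)}(n) = \sum_{m=0}^{n}(-1)^{n-m}\binom{n}{m}[pm+1]^{n-i}.
\]

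I would then recognize the right-hand side as the $n$-th forward difference $\Delta^n f(0)$ of the polynomial $f(m)=(pm+1)^{n-i}$, via $\Delta^n f(0)=\sum_{m=0}^{n}(-1)^{n-m}\binom{n}{m}f(m)$. Since $f$ is a polynomial in $m$ of degree exactly $n-i$, the standard fact that the $n$-th difference annihilates polynomials of degree below $n$ and returns $n!$ times the leading coefficient when the degree equals $n$ completes the argument: for $i>0$ the degree is $n-i<n$ and the sum vanishes, while for $i=0$ the degree is $n$ with leading coefficient $p^n$, yielding $p^n n!$.

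The computation is essentially mechanical once these two facts are in place. The only point demanding care is the change of variables together with the bookkeeping of summation limits when swapping the order, after which both the partial binomial identity and the finite-difference fact are standard. I therefore expect the main (and only minor) obstacle to be verifying the index ranges, since an off-by-one in the upper limit $n-m$ would corrupt the reduction on which the whole argument hinges.
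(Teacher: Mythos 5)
Your proof is correct, but it takes a genuinely different route from the paper's. The paper derives the sum from the recurrence of Lemma \ref{lem:recurrence}: summing that recurrence over $j$ and telescoping the two terms gives $\sum_{j}v_{i,j}^{(p)}(n) = pn\sum_{j}v_{i,j}^{(p)}(n-1)$, which is then iterated down to a base case. You instead compute the double sum directly: the change of variables $m=j-r$, the partial alternating-binomial identity $\sum_{r=0}^{M}(-1)^r\binom{N}{r}=(-1)^M\binom{N-1}{M}$, and the identification of $\sum_{m=0}^{n}(-1)^{n-m}\binom{n}{m}(pm+1)^{n-i}$ as the $n$-th forward difference of a polynomial of degree $n-i$. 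All of your index bookkeeping checks out (the region $\{0\le r\le j\le n\}$ does become $\{m,r\ge0,\ m+r\le n\}$, and $\binom{n}{n-m}=\binom{n}{m}$ is applied correctly). What your argument buys is self-containedness --- it does not depend on Lemma \ref{lem:recurrence} --- and it makes the dichotomy between $i=0$ and $i>0$ transparent, since it is exactly the statement that the $n$-th difference kills polynomials of degree less than $n$ and extracts $n!$ times the leading coefficient $p^n$ otherwise; this is the same mechanism as in Lemma \ref{lem:lastzero}. What the paper's route buys is economy within its own architecture: the recurrence is proved anyway for the triangle of generalized Eulerian numbers, and the telescoping sum falls out of it in two lines (the paper is admittedly terse about the terminal step of the iteration, which your degree argument handles more explicitly).
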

\begin{proof}
 By Lemma $\ref{lem:recurrence}$, we have
 \begin{eqnarray*}
  \sum_{j=0}^{n}v_{i, j}^{(p)}(n)
   & = &
   \sum_{j = 0}^{n}
   \left\{
    [p(n + 1 - j) - 1]v_{i, j - 1}^{(p)}(n - 1)
    +(pj + 1)v_{i, j}^{(p)}(n - 1)
   \right\}\\
   & = &
   \sum_{j = 0}^{n}
    [p(n + 1 - j) - 1]v_{i, j - 1}^{(p)}(n - 1)
    +\sum_{j = 0}^{n}
    (pj + 1)v_{i, j}^{(p)}(n - 1)
   \\
   & = &
   \sum_{j = 0}^{n - 1}
    [p(n - j ) - 1]v_{i, j }^{(p)}(n - 1)
    +
    \sum_{j = 0}^{n - 1}
    (pj + 1)v_{i, j}^{(p)}(n - 1)
   \\
   & = &
   \sum_{j = 0}^{n - 1}
    pm v_{i, j }^{(p)}(n - 1)
   \\
  & = &
   pn \sum_{j = 0}^{n - 1}v_{i, j }^{(p)}(n - 1)\\
   & = &
    \begin{cases}
     p^{n}n! & \mbox{ if } i = 0,\\
     0 & \mbox{ if }i > 0.
    \end{cases}
 \end{eqnarray*}
\end{proof}

The following Proposition $\ref{prop:symmm}$ shows a symmetry
of the generalized Eulerian number.
\begin{prop}\label{prop:symmm}
 Let $n$ be a positive integer. Then
 \[
  v^{(1)}_{i,n - 1 - j} = (-1)^iv^{(1)}_{i,j}(n)
 \mbox{~~~for }0\leq j\leq n - 1.
 \]
 Let $p > 1$ and $p^*$ be the real number satisfying
 \[
  \frac{1}{p} + \frac{1}{p^*} = 1.
 \]
 Then,
 \[
  v^{(p^*)}_{i,n - j}(n) = (-1)^i\left(\frac{p^*}{p}\right)^{n - i}v^{(p)}_{i,j}(n)
 \mbox{~~~for }0\leq j\leq n.
 \]
\end{prop}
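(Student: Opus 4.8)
The plan is to prove both identities by one mechanism: rewrite each defining sum, which runs over $r=0,\ldots,j$, as minus the complementary sum over $r=j+1,\ldots,n+1$, reindex by $r\mapsto n+1-r$, and then match the result term-by-term with the reflected Eulerian number using the algebraic identity that encodes $\frac1p+\frac1{p^*}=1$.

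First I would record the vanishing fact underlying the proof of Lemma~\ref{lem:lastzero}: for any polynomial $Q$ of degree at most $n$,
\[
 \sum_{r=0}^{n+1}(-1)^r\binom{n+1}{r}Q(r)=0,
\]
since the left-hand side is the $(n+1)$-st finite difference of $Q$. Applying this to $Q(r)=[p(j-r)+1]^{n-i}$, which has degree $n-i\le n$, gives the enabling rewriting
\[
 v_{i,j}^{(p)}(n)=\sum_{r=0}^{j}(-1)^r\binom{n+1}{r}[p(j-r)+1]^{n-i}
 =-\sum_{r=j+1}^{n+1}(-1)^r\binom{n+1}{r}[p(j-r)+1]^{n-i}.
\]
This converts a partial sum into a sum over the opposite range, where the arguments $p(j-r)+1$ become negative; this is exactly the regime in which the conjugate base $p^*$ surfaces.

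For the general case $p>1$ I would expand $v^{(p^*)}_{i,n-j}(n)$ and substitute $s=n+1-r$ to bring its range to $s=j+1,\ldots,n+1$, obtaining
\[
 v^{(p^*)}_{i,n-j}(n)=(-1)^{n+1}\sum_{s=j+1}^{n+1}(-1)^s\binom{n+1}{s}[p^*(s-j-1)+1]^{n-i}.
\]
The crux is the elementary identity equivalent to $\frac1p+\frac1{p^*}=1$: for every integer $m$,
\[
 p^*m+1=-\frac{p^*}{p}\bigl(p(-m-1)+1\bigr).
\]
Taking $m=s-j-1$ (so that $-m-1=j-s$) converts $[p^*(s-j-1)+1]^{n-i}$ into $(-1)^{n-i}(p^*/p)^{n-i}[p(j-s)+1]^{n-i}$. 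Substituting this and recognizing the complementary sum from the previous paragraph as $-\,v^{(p)}_{i,j}(n)$, the sign factors $(-1)^{n+1}$, $(-1)^{n-i}$ and the remaining $(-1)$ collapse to $(-1)^i$, which yields $v^{(p^*)}_{i,n-j}(n)=(-1)^i(p^*/p)^{n-i}v^{(p)}_{i,j}(n)$.

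The case $p=1$ is the boundary of this argument and needs one extra observation, which also explains why the reflection is $j\mapsto n-1-j$ rather than $j\mapsto n-j$: here the argument $j+1-r$ vanishes at $r=j+1$, so provided $n-i>0$ the boundary term $\binom{n+1}{j+1}0^{\,n-i}$ is zero and the complementary sum over $\{j+2,\ldots,n+1\}$ agrees with the one over $\{j+1,\ldots,n+1\}$. The same reindexing $s=n+1-r$, together with $(s-j-1)^{n-i}=(-1)^{n-i}(j+1-s)^{n-i}$ and the finite-difference identity, then gives $v^{(1)}_{i,n-1-j}(n)=(-1)^i v^{(1)}_{i,j}(n)$. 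I expect the main difficulty to lie not in any single step but in the careful tracking of the three interacting sign factors and of the off-by-one in the summation range; getting the reflection index right ($n-j$ versus $n-1-j$) is precisely what separates the two cases, and it is governed by whether the base of the power ever reaches zero.
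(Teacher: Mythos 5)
Your proposal is correct and follows essentially the same route as the paper's proof: both exploit the vanishing of the full alternating sum $\sum_{r=0}^{n+1}(-1)^r\binom{n+1}{r}Q(r)$ for $\deg Q\le n$ to pass between a partial sum and its complement, reindex $r\mapsto n+1-r$, and use the conjugacy relation $\tfrac1p+\tfrac1{p^*}=1$ in the form $p^*m+1=-\tfrac{p^*}{p}\bigl(p(-m-1)+1\bigr)$ to match the summands. If anything you are slightly more careful than the paper, which writes out only the $p>1$ case; your explicit handling of the $p=1$ boundary term (valid precisely when $n-i>0$, which covers all rows $i\le m-1=n-1$ actually used in $V$) is a worthwhile addition.
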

\begin{proof}
 We show the proof only for the second part.
 The first part can be proved in the same manner.
 If $p > 1$ then $p^* = p/(p - 1)$.
 \begin{eqnarray*}
  v^{(p^*)}_{i, n - j}(n) & = &
   \sum_{k=0}^{n - j}(-1)^k{n + 1 \choose k}
   (p^*(n - j - k) + 1)^{n - i}\\
  & = & 
      \sum_{k = n + 1 - j}^{n + 1}(-1)^k{n + 1 \choose k}
   (p^*(n - j - k) + 1)^{n - i}\\
  & = & 
      - \sum_{k' = 0}^{j}(-1)^{n + 1 - k'}{n + 1 \choose n + 1 - k'}
   (p^*(n - j - (n + 1 - k')) + 1)^{n - i}\\
  & = & 
      - \sum_{k' = 0}^{j}(-1)^{n + 1 - k'}{n + 1 \choose k'}
   \left(\frac{p}{p-1}(k' - j - 1) + 1\right)^{n - i}\\
  & = & 
   - \sum_{k = 0}^{j}(-1)^{n + 1 - k}{n + 1 \choose k}
      \left(\frac{-1}{p - 1}\right)^{n - i}
   \left(p(j - k + 1) - p + 1\right)^{n - i}\\
  & = & 
   (-1)^{n}\left(\frac{-1}{p - 1}\right)^{n - i}
      \sum_{k = 0}^{j}(-1)^{k}{n + 1 \choose k}
   \left(p(j - k) + 1\right)^{n - i}\\
  & = & 
   \frac{(-1)^{i}}{(p - 1)^{n - i}}
   \sum_{k = 0}^{j}(-1)^{k}{n + 1 \choose k}
   \left(p(j - k) + 1\right)^{n - i}\\
 \end{eqnarray*}
\end{proof}
%

\subsection{Left eigenvectors}

\begin{proof}[Proof of Theorem $\ref{thm:main}$]
 The proof is essentially the same as that of Holte \cite{Holte}.
 It suffices to show that
 \[
  \sum_{k = 0}^{m - 1}v_{i,k}^{(p)}(n)\tilde{p}_{k,j} = \frac{1}{b^i}v_{i,j}^{(p)}(n).
 \]
 We prove the theorem for the case 
 in which $p\neq 1$, i.e., $m = n + 1$,
 and the other case can be proved in the same manner.
 By Lemma $\ref{lem:standardp}$ we have
 \[
  \tilde{p}_{k,j} =
   \frac{1}{b^n}
  \sum_{r = 0} ^j
  (-1)^r{n + 1 \choose r}
  {n + K(j,r) - k \choose n},
 \]
 where we put $K(j,r) = b(j - r) + \frac{b - 1}{p}$
 for the simplicity of the notation.

 \begin{eqnarray*}
  \sum_{k = 0}^{n}v_{i,k}^{(p)}(n)\tilde{p}_{k,j}
  & = &
   \sum_{k = 0}^n
   \frac{1}{b^n}
   \sum_{r = 0}^j
 (-1)^r{n + 1 \choose r}
 {n + K(j,r) - k \choose n}v_{i,k}^{(p)}(n)\\
  & = &
   \frac{1}{b^n}
   \sum_{r = 0}^{j}
   (-1)^r{n + 1 \choose r}
   \sum_{k = 0}^{K(j,r)}
  {n + K(j,r) - k \choose n}
  v_{i,k}^{(p)}(n)\\
  & = &
   \frac{1}{b^n}
   \sum_{r = 0}^{j}
   (-1)^r{n + 1 \choose r}
   \{pK(j,r) + 1\}^{n - i}.
 \end{eqnarray*}
 The third equality in the above transformation is derived as follows:
 First recall that $v_{i,k}^{(p)}(n)$ is the
 coefficient of $x^{k}$ in 
 \[
 \left(
 \sum_{\nu = 0}^{n + 1}(-1)^{\nu}
 {n + 1 \choose \nu} x^\nu
 \right)
 \left(
 \sum_{\mu = 0}^\infty (p\mu + 1)^{n - i}x^\mu
 \right)
 =
 (1 - x)^{n + 1}
 \left(
 \sum_{\mu = 0}^\infty (p\mu + 1)^{n - i}x^\mu
 \right),
 \]
 and 
 \[
  \frac{1}{(1 - x)^{n + 1}}
 =
 \sum_{k = 0}^\infty
 {n + k \choose n}x^k.
 \]
 Therefore, $\sum_{k = 0}^{K(j,r)}{n + K(j,r) - k \choose n} v_{i,k}^{(p)}(n)$
 is the coefficient of $x^{K(j,r)}$ in
 $\sum_{\mu = 0}^\infty (p\mu + 1)^{n - i}x^\mu$.

 It can be easily confirmed that $pK(j,r) + 1 = b(p(j - r) + 1)$
 which completes the proof.
\end{proof}

Theorem $\ref{thm:main}$ gives a way of finding a numeration
system $(b,{\mathcal D})$ whose $n$-carry process
has the stationary distribution of the form
\[
 \pi = (\pi(s), \pi(s + 1), \ldots ,\pi(s + m - 1))
 =
 \frac{1}{p^nn!}
 \left(\eunump{n}{0},\eunump{n}{1},\ldots,\eunump{n}{m - 1}\right)
\]
for any given $n$ and rational $p\geq 1$.
For instance, if $p = \frac{K}{L}$ where 
$K$ and $L$ are coprime positive integers such that $K\geq L$,
then we can choose $b$ and $d$ as
\begin{equation}\label{eq:findbd}
 b = (n - 1)K + 1,~~~~~ d = -L. 
\end{equation}

\begin{exm}
 We construct numeration systems for $p = 2$ and $5/3$.

\begin{table}[H]
 \begin{center}
  \[
   \begin{array}{c|c|c|c|c}
    n & b & {\mathcal D} & P & V\\
    \hline
    & & & & \\
     2 & 3 & \{-1, 0, 1\} & 
     \frac{1}{3^2}     
     \begin{pmatrix}
      3 & 6 & 0\\
      1 & 7 & 1\\
      0 & 6 & 3
     \end{pmatrix}
     &
     \begin{pmatrix}
      1 &  6 & 1\\
      1 & 0 & -1\\
      1 & -2 & 1
     \end{pmatrix}
     \\
    & & & & \\
    3 & 5 & \{-1, 0, 1, 2, 3\} &
    \frac{1}{5^3}
    \begin{pmatrix}
     10 & 80 & 35 & 0\\
     4  & 68 & 52 & 1\\
     1  & 52 & 68 & 4\\
     0  & 35 & 80 & 10
    \end{pmatrix}
    &
    \begin{pmatrix}
     1 & 23 & 23 & 1\\
     1 & 5 & -5 & -1\\
     1 & -1 & -1 & 1\\
     1 &  -3 & 3 & -1
    \end{pmatrix}
    \\
    &&&&\\
    4 & 7 &\{-1, 0, 1, 2, 3, 4, 5\} &
    \frac{1}{7^4}
     \begin{pmatrix}
      35 & 826 & 1330 & 210 & 0 \\
      15 & 640 & 1420 & 325 & 1 \\
      5  & 470 & 1451 & 470 & 5 \\
      1  & 325 & 1420 & 640 & 15\\
      0  & 210 & 1330 & 826 & 35
     \end{pmatrix}
     &
    \begin{pmatrix}
     1 & 76 & 230 & 76 & 1\\
     1 & 22 & 0 & -22 & -1\\
     1 & 4 & -10 & 4 & 1\\
     1 & -2 & 0 & 2 & -1\\
     1 & -4 & 6 & -4 & 1
    \end{pmatrix}
   \end{array}
  \]
  \caption{Amazing matrices with $p = 2$}
 \end{center}
\end{table}

\begin{table}[H]
 \begin{center}
  \[
   \begin{array}{c|c|c|c|c}
    n & b & {\mathcal D} & P & V\\
    \hline
    & & & & \\
    2 & 6 & \{-3,-2,\ldots,2\}&
     \frac{1}{6^2}
    \begin{pmatrix}
     10 & 25 & 1\\
     6 & 27 & 3\\
     3 & 27 & 6
    \end{pmatrix}
    &
    \begin{pmatrix}
     1&  37/9 &4/9\\
     1&  -1/3 &-2/3\\
     1&  -2   &1
    \end{pmatrix}
    \\
    %
    %
    & & & & \\
    3 & 11 & \{-3,-2,\ldots,7\} &
     \frac{1}{11^4}
     \begin{pmatrix}
      84 & 804 & 439 & 4\\
      56 & 745 & 520 & 10\\
      35 & 676 & 600 & 20\\
      20 & 600 & 676 & 35
     \end{pmatrix}
     &
     \begin{pmatrix}
      1 & 404/27 &311/27 & 8/27\\
      1 & 28/9 & -11/3 & -4/9\\
      1 & -4/3 & -1/3 & 2/3\\
      1 & -3 & 3 & -1
     \end{pmatrix}
     \\
    %
    %
    & & & & \\
    4 & 16 & \{-3,-2,\ldots,12\} &
     {\small
     \frac{1}{16^4}
     \begin{pmatrix}
      715 & 20176 & 37390 & 7240 & 15\\
      495 & 18000 & 38326 & 8680 & 35\\
      330 & 15900 & 38960 & 10276&  70\\
      210 & 13900 & 39280 & 12020&  126\\
      126 & 12020 & 39280 & 13900&  210
     \end{pmatrix}
     }
     &
     {\small 
     \begin{pmatrix}
      1 & 3691/81 & 8891/81 & 2321/81 & 16/81\\
      1 & 377/27  & -31/9   & -101/9  & -8/27\\
      1 & 19/9    & -61/9   & 29/9    & 4/9\\
      1 & -7/3    & 1       & 1       & -2/3\\
      1 & -4      & 6       & -4      & 1
     \end{pmatrix}
     }
   \end{array}
  \]
  \caption{Amazing matrices with $p = \frac{5}{3}$}
 \end{center}
\end{table}
%
%

\end{exm}

\subsection{Sum of independent uniform random variables}

\begin{proof}[Proof of Corollary $\ref{cor:sumeuler}$]
Let $X_{i,j} (i = 1, 2, \ldots, n, j = 1,2,\ldots)$ be
independent random variables each distributed uniformly
on ${\mathcal D}$.
Then, for each integer $k\geq 1$, the random variables
\[
 X_{i}^{(k)} = 
 \frac{X_{i,1}}{b} +  \frac{X_{i,2}}{b^2} + 
 \cdots
 + \frac{X_{i,k}}{b^k}, ~~~~~i =1,2,\ldots,n
\]
are independent random variables uniformly distributed over the
set
\[
 R_k = \left\{
 \left.
 \frac{x_{1}}{b} +  \frac{x_{2}}{b^2} + 
 \cdots
 + \frac{x_{k}}{b^k}\,\right|\, x_{i}\in {\mathcal D}
 \right\}.
\]
Therefore
\[
 \lim_{k\rightarrow\infty}\pr(X_i^{(k)} \in [a,b])
 = b - a, ~~~\mbox{ for } a \geq b\in [l, l+1] \mbox{ and } i=1,2,\ldots,n.
\]
Let $X_1, X_2, \ldots, X_n$ be independent random variables
each of which is distributed uniformly over $[l, l + 1]$.
Then, for any integer $c\in\Omega$,
\[
 \lim_{k\rightarrow\infty}\pr(X_1^{(k)} + X_2^{(k)} + \cdots + X_n^{(k)} \in c + [l, l + 1])
 = \pr(X_1 + X_2 + \cdots X_n  \in c + [l, l + 1]).
\]
 Since
\[
  \lim_{k\rightarrow\infty}\pr(X_1^{(k)} + X_2^{(k)} + \cdots + X_n^{(k)} \in c + [l, l + 1])
 =
 \pi(c),
\]
 we have
 \[
  \pi(c) = \pr(X_1 + X_2 + \cdots X_n  \in c + [l, l + 1]).
 \]
 Let $p > 1$ be a non-integral rational number, and
 suppose that we choose $b$ and $d$ so that
 \[
 p = \frac{1}{\{(n - 1)(-l)\}}
 \]
 holds, which is always possible by $(\ref{eq:findbd})$. Then, we have
 \begin{eqnarray*}
   \frac{1}{p} = (n - 1)(-l) - \lfloor(n - 1)(-l)\rfloor
  &\Longleftrightarrow &
   nl + \frac{1}{p} = l - \left\lfloor(n - 1)(-l)\right\rfloor 
   = l + (1 - \lceil(n - 1)(-l)\rceil)\\
  &\Longleftrightarrow &
   nl + \frac{1}{p} + [k - 1, k] = (s + k) + [l, l + 1].
 \end{eqnarray*}
 Let $Y_1, Y_2, \ldots, Y_n$ be independent random variables
 each uniformly distributed on the unit interval $[0,1]$.
 \begin{eqnarray*}
  \pr(Y_1 +  Y_2 + \ldots + Y_n \in \frac{1}{p} + [k - 1, k])
   & = &
  \pr((Y_1 + l) +  \ldots + (Y_n + l) \in nl + \frac{1}{p} + [k - 1, k])\\
  & = &
   \pr(X_1 + X_2 + \cdots + X_l \in nl + \frac{1}{p} + [k - 1, k])\\
  & = &
   \pr(X_1 + X_2 + \cdots + X_l \in (s + k) + [l, l+1])\\
  & = &
   \pi(s + k)\\
   & = &
   \frac{1}{p^nn!}\eunump{n}{k}.
 \end{eqnarray*}
 When $p$ is an integer, the statement is proved by a similar argument. 
 Since both sides of the equation $(\ref{eq:sumunifiid})$ are
 continuous function of $p$,
 the statement of the theorem holds when $p$ is irrational.
\end{proof}

\begin{rmk}
 Corollary $\ref{cor:sumeuler}$ gives a different proof and
 a new interpretation for {\rm Proposition} $\ref{prop:symmm}$ for the case $i = 0$.
\end{rmk}

\section{Negative base}

In this section, we consider carries processes over
the numeration systems with the negative bases.
Let $b > 1$ be an integer and $\mathcal{D} = \{d, d + 1, \ldots, d + b - 1\}$ 
a set of integers containing $0$.
Suppose an integer $x$ can be represented in the form:
\[
 x = (x_l x_{l - 1} \cdots x_{0})_{-b}
 = x_l(-b)^l + x_{l - 1}(-b)^{l - 1} + \cdots + x_1(-b) + x_0,
\]
where $l$ is a non-negative integer and $x_l\neq 0$.
Then this representation is unique and the set
\[
 \left\{(x_lx_{l - 1} \ldots x_{0})_{-b}\,|\, l\geq 0, x_k \in {\mathcal D}\right\}.
\]
is closed under the addition.
We can define the $n$-carry process  over the
numeration system $(-b, {\mathcal D})$ in the same 
manner as the positive base case. Let $\{X_{i,j}\}_{1\leq i\leq n, j\geq 0}$
be a set of i.i.d. random variables each distributed uniformly on ${\mathcal D}$.
Then the carries process $(C_0, C_1, C_2, \ldots)$ is defined  as follows:
$\pr(C_0 = 0) = 1$ and
\[
 C_i = \frac{C_{i - 1} + X_{1,i-1} + \cdots + X_{n,i-1} - A_{i-1}}{-b}~~~~
 \mbox{ for }i > 0,
\]
where $A_j$ is ${\mathcal D}$-valued.
These carries processes have the properties similar to those
of the positive base cases.
The proofs of the following Lemma $\ref{lem:negstatespace}$,
Theorem $\ref{thm:negtransp}$, and Lemma $\ref{lem:negstandardp}$ are similar to those of
Lemma $\ref{lem:statespace}$,Theorem $\ref{thm:transp}$, and Lemma $\ref{lem:standardp}$.
The proof of Theorem $\ref{thm:negamazing}$ needs
an additional combinatorial argument.

\begin{lem}\label{lem:negstatespace}
  Let $\Omega  = \Omega_n(-b,{\mathcal D})$ be the state space of the
  $n$-carry process over the numeration system 
  $(-b,{\mathcal D})$.
  Then, $\Omega = \{s,s+1, \ldots,t\}$ with
  \[
   s = -\lceil(n - 1)(- l)\rceil = \lfloor(n - 1)l\rfloor,
   ~~
   t = \lceil (n - 1)(l + 1) \rceil,
  \]
  where $l = (-d-b)/(b + 1)$.
  Therefore, the size of the state space $\Omega$ is
  \[
   \#\Omega =
  \begin{cases}
   n + 1 & (n - 1)l \not\in {\mathbb Z},\\
   n &  (n - 1)l \in {\mathbb Z}.
  \end{cases}
  \]
 \end{lem}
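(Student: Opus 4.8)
The plan is to follow the proof of Lemma \ref{lem:statespace} step for step, systematically replacing the base $b$ by $-b$. First I would introduce the base-$(-b)$ analogue of the fractional-expansion set,
\[
 F = \left\{\sum_{i=1}^m \frac{x_i}{(-b)^i} \,\Big|\, x_i\in\mathcal{D},\ m>0\right\},
\]
and prove that $F$ is a dense subset of the open interval $(l,l+1)$ with $l=(-d-b)/(b+1)$. Once this is established, the carry argument transcribes almost verbatim, since every truncated $(-b)$-expansion of the summands lies in $F$.

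The one genuinely new computation is the location of this interval, because the weights $(-b)^{-i}$ alternate in sign. I would extremise $\sum_i x_i(-b)^{-i}$ digitwise: to maximise, take $x_i=d+b-1$ on even $i$ (where $(-b)^{-i}=b^{-i}>0$) and $x_i=d$ on odd $i$ (where $(-b)^{-i}=-b^{-i}<0$), and symmetrically to minimise. Summing the two geometric series $\sum_{i\ \mathrm{even}}b^{-i}=1/(b^2-1)$ and $\sum_{i\ \mathrm{odd}}b^{-i}=b/(b^2-1)$ yields $\sup F=(1-d)/(b+1)=l+1$ and $\inf F=(-d-b)/(b+1)=l$. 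Thus the endpoints coincide with those of Lemma \ref{lem:statespace} after the substitution $d/(b-1)\mapsto(-d-b)/(b+1)$. Density I would get from the self-similar identity $f=-(x_1+g)/b$, relating $f\in F$ to its tail $g\in F\cup\{0\}$: as $x_1$ runs over $\mathcal{D}$, the (decreasing) images of $[l,l+1]$ under $g\mapsto-(x_1+g)/b$ tile $[l,l+1]$ in abutting strips of width $1/b$.

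With $F$ in hand the remainder is routine. Dividing the identity $(x_{1,i-1}\cdots x_{1,0})_{-b}+\cdots+(x_{n,i-1}\cdots x_{n,0})_{-b}=c(-b)^i+(a_{i-1}\cdots a_0)_{-b}$ by $(-b)^i$ turns each truncated expansion into an element of $F$, so $c_i=c$ forces $c\in nF-F\subset\left((n-1)l-1,\,(n-1)(l+1)+1\right)$. For the converse I would show, exactly as in the positive case, that every integer $c$ in this open interval satisfies $c+F\subset nF$, so that $c$ is realised as a carry; this is the step that invokes the self-similar decomposition of $F$. Reading off the extreme integers then gives $s=\lfloor(n-1)l\rfloor=-\lceil(n-1)(-l)\rceil$ and $t=\lceil(n-1)(l+1)\rceil$, and $\#\Omega=t-s+1=\lceil(n-1)l\rceil-\lfloor(n-1)l\rfloor+n$ equals $n+1$ or $n$ according as $(n-1)l\notin\mathbb{Z}$ or $(n-1)l\in\mathbb{Z}$.

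The main obstacle is the sign bookkeeping: the extremising digit pattern and the alternating geometric sums must be handled carefully to confirm that the range of $F$ is still $(l,l+1)$ rather than a shifted or reflected interval, and the converse inclusion $c+F\subset nF$ is more delicate than for positive bases because the $n$-fold digit-by-digit decomposition must respect the alternating carries. I expect this converse, rather than the range computation, to be the delicate point, though it remains within the elementary self-similar argument and does not require the heavier combinatorics that the paper reserves for Theorem \ref{thm:negamazing}.
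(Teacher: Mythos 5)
Your proposal is correct and follows exactly the route the paper intends: the paper gives no explicit proof of Lemma~\ref{lem:negstatespace}, stating only that it is ``similar to'' that of Lemma~\ref{lem:statespace}, and your write-up is precisely that analogy carried out, with the one genuinely new ingredient---the alternating-series computation showing $\inf F=(-d-b)/(b+1)=l$ and $\sup F=(1-d)/(b+1)=l+1$---done correctly. The level of detail you give for the converse inclusion $c+F\subset nF$ matches (indeed slightly exceeds) what the paper itself supplies in the positive-base case.
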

\begin{thm}\label{thm:negtransp}
 Let $i,j\in \Omega(-b,{\mathcal D})$. Then the transition probability
 $p_{i,j} = \pr(C_{t+1} = j \,|\, C_{t} = i)$ for $t > 0$ is
 \[
  p_{i,j} =
 \frac{1}{b^n} \sum_{r = 0}^{-j + 1 + \left\lfloor\frac{-i-1+(1-n)d}{b}\right\rfloor}
 (-1)^r{n + 1\choose r}{n - b(j-1+r)-i-1+(1-n)d \choose n}.
 \]
\end{thm}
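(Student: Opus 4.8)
The plan is to follow the proof of Theorem~\ref{thm:transp} almost verbatim, the only structural change being the sign of the base. Given $C_k = i$ and $C_{k+1} = j$, the recursion over $(-b,\mathcal{D})$ forces the existence of a digit $a\in\mathcal{D}$ with $j = (i + X_{1,k} + \cdots + X_{n,k} - a)/(-b)$, so that $b^n p_{i,j}$ equals the number $N$ of tuples $(x_1,\ldots,x_n,a)\in\mathcal{D}^{n+1}$ solving $-bj + a = i + x_1 + \cdots + x_n$. This is exactly the positive-base counting equation with $bj$ replaced by $-bj$; everything downstream is then a matter of tracking this replacement through the same algebra.

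First I would uniformize the digit set as in Theorem~\ref{thm:transp}. Writing $a = (d+b-1) - y$ with $y\in\{0,1,\ldots,b-1\}$ turns the equation into $x_1 + \cdots + x_n + y = -bj + d + b - 1 - i$, and then adding $d$ to both sides and setting $z = y + d\in\mathcal{D}$ reduces $N$ to the number of solutions of $x_1 + \cdots + x_n + z = -b(j-1) + 2d - 1 - i$ in $\mathcal{D}^{n+1}$. Hence $N$ is the coefficient of $x^{M}$, with $M = -b(j-1) + 2d - 1 - i$, in $(x^{d} + x^{d+1} + \cdots + x^{d+b-1})^{n+1}$.

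Next I would extract this coefficient by the same factorization used before, namely $(x^{d}+\cdots+x^{d+b-1})^{n+1} = x^{d(n+1)}\bigl(\sum_{r}(-1)^r\binom{n+1}{r}x^{br}\bigr)\bigl(\sum_{\rho}\binom{n+\rho}{n}x^{\rho}\bigr)$. Matching exponents $d(n+1)+br+\rho = M$ gives $\rho = -b(j-1+r) + (1-n)d - 1 - i$, so each contribution carries the binomial $\binom{n+\rho}{n} = \binom{n - b(j-1+r) - i - 1 + (1-n)d}{n}$; imposing $\rho\ge 0$ yields $r \le -j+1+\lfloor(-i-1+(1-n)d)/b\rfloor$, which is precisely the stated upper summation limit. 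Dividing by $b^n$ then produces the claimed formula.

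The one place where care is genuinely needed is the role of the negative base. Since $0\in\mathcal{D}$ forces $d\le 0$, the polynomial $(x^d+\cdots+x^{d+b-1})^{n+1}$ is really a Laurent polynomial and, because $j$ may be positive, the target exponent $M$ can be negative; I would note that pulling out the factor $x^{d(n+1)}$ leaves an honest power series in $x$, so the coefficient extraction above is legitimate regardless of the sign of $M$. The second subtlety is that the sign flip $bj\mapsto -bj$ reverses the inequality determining the range of summation, so one must verify that the resulting bound lands on the floor expression $-j+1+\lfloor(-i-1+(1-n)d)/b\rfloor$ rather than its positive-base analogue; this is exactly the step that distinguishes this proof from that of Theorem~\ref{thm:transp}.
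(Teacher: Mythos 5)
Your proposal is correct and is exactly the argument the paper intends: it omits a written proof of Theorem~\ref{thm:negtransp} and simply states that it is analogous to Theorem~\ref{thm:transp}, and your adaptation (replacing $bj$ by $-bj$ in the counting equation, uniformizing the digit set, and extracting the coefficient of $x^{-b(j-1)+2d-1-i}$ from $(x^{d}+\cdots+x^{d+b-1})^{n+1}$) reproduces the stated binomial and the upper summation limit $-j+1+\lfloor(-i-1+(1-n)d)/b\rfloor$ correctly. No further comment is needed.
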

We denote
\[
 \tilde{p}_{i,j}=p_{i + s, j + s}.
\]
where $s$ is the minimal element of the
state space $\Omega(-b, {\mathcal D})$ of the $n$-carry process over $(-b,{\mathcal D})$.
\begin{lem}\label{lem:negstandardp}
 Let $p$ be defined by 
 \begin{equation}\label{eq:negmain}
  p = 
 \begin{cases}
   \frac{1}{\{(n - 1)l\}} & (n - 1)l \not\in {\mathbb Z} 
  ~~(\Leftrightarrow m = n + 1),\\
  1 &  (n - 1)l \in {\mathbb Z}
  ~~(\Leftrightarrow m = n).
 \end{cases}
 \end{equation}
 where $l = (-d - b)/(b + 1)$ and $m = \#\Omega_n(-b,{\mathcal D})$.
 Then,
 \[
 \tilde{p}_{i,j} = \frac{1}{b^n}
 \sum_{r = 0}^{n - j}(-1)^r
 {n + 1 \choose r}
 {n + b(n + 1 - j - r) - \frac{b + 1}{p} - i
 \choose
 n
 }.
 \]
\end{lem}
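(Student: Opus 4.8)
The plan is to follow the proof of Lemma~\ref{lem:standardp} essentially line for line, the only genuinely new work being the floor/fractional-part arithmetic forced by the reflected recursion $C_i=(\cdots)/(-b)$. I would start from the closed form of $p_{i,j}$ in Theorem~\ref{thm:negtransp} and substitute the shift $\tilde p_{i,j}=p_{i+s,j+s}$, where by Lemma~\ref{lem:negstatespace} the minimal state is $s=\lfloor(n-1)l\rfloor$ with $l=(-d-b)/(b+1)$. This turns both the binomial argument and the upper limit of summation into expressions in $i,j,r,s$ that must be collapsed to the claimed form.

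First I would simplify the upper argument of the binomial coefficient. After the substitution it reads $n-b(j+s-1+r)-(i+s)-1+(1-n)d$; collecting the two occurrences of $s$ produces a term $-(b+1)s$, and this is where the explicit value of $s$ enters. Writing $s=\lfloor(n-1)l\rfloor=(n-1)l-\{(n-1)l\}$ together with $(n-1)l=(n-1)(-d-b)/(b+1)$ gives $(b+1)s=(n-1)(-d-b)-(b+1)\{(n-1)l\}$. Substituting this back, the $d$-dependent pieces cancel: the $-(n-1)d$ coming from $(1-n)d$ is killed by the $(n-1)(d+b)$ coming from $-(b+1)s$, leaving a clean $(n-1)b$ together with the residue $(b+1)\{(n-1)l\}$. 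Regrouping $b(n-j-r)=b(n+1-j-r)-b$ then yields $n+b(n+1-j-r)-i-(b+1)\bigl(1-\{(n-1)l\}\bigr)$, and invoking the definition (\ref{eq:negmain}) of $p$ identifies the trailing constant with $\tfrac{b+1}{p}$. Keeping track of exactly which fractional part survives the cancellation—that of $(n-1)l$ or of its negative—is a point that needs care here, because the sign $l<0$ (forced by $0\in\mathcal D$ in a negative base) is opposite to the positive-base situation; the integral case $(n-1)l\in\mathbb Z$ (where $m=n$ and $p=1$) is then the same computation with $\{(n-1)l\}=0$.

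The remaining point, which I expect to be the real obstacle, is the bookkeeping of the summation range. I would substitute the shift into the upper limit $-j+1+\lfloor(-i-1+(1-n)d)/b\rfloor$ of Theorem~\ref{thm:negtransp} and, exactly as in the ``common upper bound'' step of Lemma~\ref{lem:standardp}, compare it with the claimed limit $n-j$. Because $l<0$ reverses the direction of several estimates, one must verify carefully that $n-j$ is a legitimate bound for every admissible $i,j$: any $r$ lying between the tight limit and $n-j$ contributes a binomial $\binom{M}{n}$ with $0\le M<n$, which vanishes, so extending the sum to $n-j$ adds nothing, while the inequalities $p\ge1$ and $b>1$ guarantee that no genuinely nonzero term is dropped. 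Checking that these two conditions hold simultaneously across the whole state space is the delicate part; once it is in place, the displayed formula follows immediately.
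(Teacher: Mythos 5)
Your overall route is exactly the paper's: the paper offers no separate proof of this lemma, saying only that it is similar to that of Lemma~\ref{lem:standardp}, and your line-for-line transposition --- substitute the shift into Theorem~\ref{thm:negtransp}, absorb $(b+1)s$ via $s=\lfloor(n-1)l\rfloor$, and enlarge the summation limit to $n-j$ using the vanishing of $\binom{M}{n}$ for $0\le M<n$ together with $p\ge 1$ --- is the intended argument. Your arithmetic is also correct up to the last line: the binomial argument does collapse to $n+b(n+1-j-r)-i-(b+1)\bigl(1-\{(n-1)l\}\bigr)$.

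The one step you flag as ``needing care'' and then wave through is, however, a genuine obstruction as you have written it: you cannot ``invoke the definition (\ref{eq:negmain})'' to identify the trailing constant with $\frac{b+1}{p}$, because (\ref{eq:negmain}) gives $\frac{b+1}{p}=(b+1)\{(n-1)l\}$ while your (correct) constant is $(b+1)\bigl(1-\{(n-1)l\}\bigr)=(b+1)\{(n-1)(-l)\}$, and these disagree whenever $\{(n-1)l\}\neq\tfrac12$. The resolution is that your computation is the trustworthy side: the displayed formula of the lemma holds with $\frac1p=\{(n-1)(-l)\}$, i.e.\ with the same convention as (\ref{eq:mainth}) in the positive-base case, and (\ref{eq:negmain}) as printed is inconsistent with the lemma's own display. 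A direct check makes this concrete: for $b=2$, $\mathcal D=\{-1,0\}$, $n=2$ one has $l=-1/3$, $s=-1$, $\Omega=\{-1,0,1\}$, and $\tilde p_{0,1}=p_{-1,0}=\tfrac14$; the displayed formula returns $\tfrac14$ when $\frac{b+1}{p}=1$ (that is, $p=3=1/\{(n-1)(-l)\}$) but returns $\tfrac34$ when $\frac{b+1}{p}=2$ (that is, $p=3/2=1/\{(n-1)l\}$). So state and prove the identity with $\frac1p=\{(n-1)(-l)\}$ rather than deferring the sign question. A small additional correction: the sign of $l$ is not ``opposite to the positive-base situation'' --- since $0\in\mathcal D$ forces $d\le 0$, one has $l\le 0$ in both the positive- and negative-base settings, so that is not where the discrepancy comes from.
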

\begin{thm}\label{thm:negamazing}
 Let $P=(\tilde{p}_{i,j})_{0\leq i,j\leq m-1}$ be the transition probability matrix
 of the $n$-carry process over the numeration system
 $(-b, {\mathcal D})$,and $m = \#\Omega(-b,{\mathcal D})$
 be the size of the state space.
 Let $p$ be defined by 
 \[
  p = 
 \begin{cases}
   \frac{1}{\{(n - 1)l\}} & (n - 1)l \not\in {\mathbb Z} 
  ~~(\Leftrightarrow m = n + 1),\\
  1 &  (n - 1)l \in {\mathbb Z}
  ~~(\Leftrightarrow m = n).
 \end{cases}
 \]
 where $l = (-d - b)/(b + 1)$ and $\{x\} = x - \lfloor x\rfloor$.
 Let $V= (v_{i,j}^{(p)})_{0\leq i,j\leq m -1}$.
 Then, we have
 \[
 VPV^{-1} = {\rm diag}\left(1, (-b)^{-1}, \ldots, (-b)^{m - 1}\right).
 \]
 In particular, the $n$-carry process over $(-b,{\mathcal D})$ has
 the stationary distribution
 \[
  \pi = \left(\pi(s), \pi(s + 1), \ldots, \pi(s + m - 1)\right)
 =
  \frac{1}{p^nn!}
 \left(\eunump{n}{0},\eunump{n}{1},\ldots,\eunump{n}{m - 1}\right).
 \]
\end{thm}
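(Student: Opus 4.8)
The plan is to mimic the proof of Theorem~\ref{thm:main}: exhibit the rows of $V$ as left eigenvectors of $P$ and then read off the stationary distribution from the row $i=0$. Concretely, writing $v_{i,\cdot}^{(p)}(n)$ for the $i$-th row, it suffices to prove
\[
 \sum_{k=0}^{m-1} v_{i,k}^{(p)}(n)\,\tilde p_{k,j} = (-b)^{-i}\,v_{i,j}^{(p)}(n),\qquad 0\le i,j\le m-1.
\]
Since the eigenvalues $(-b)^{-i}$ are distinct for $i=0,\dots,m-1$, these $m$ identities force the rows of $V$ to be linearly independent, so $V$ is invertible and $VPV^{-1}={\rm diag}(1,(-b)^{-1},\dots,(-b)^{-(m-1)})$. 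I treat the generic case $p\neq 1$ (so $m=n+1$); the case $p=1$ is handled by the same steps. I substitute the formula for $\tilde p_{k,j}$ from Lemma~\ref{lem:negstandardp}, abbreviating $K=K(j,r)=b(n+1-j-r)-\frac{b+1}{p}$, and interchange the $k$- and $r$-summations exactly as in the proof of Theorem~\ref{thm:main}.

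The inner sum over $k$ is then evaluated by the same generating-function identity used there: since $v_{i,k}^{(p)}(n)$ is the coefficient of $x^{k}$ in $(1-x)^{n+1}\sum_{\mu\ge0}(p\mu+1)^{n-i}x^\mu$ and $(1-x)^{-(n+1)}=\sum_k\binom{n+k}{n}x^k$, one has $\sum_k\binom{n+K-k}{n}v_{i,k}^{(p)}(n)=(pK+1)^{n-i}$ for the relevant $K$. A short computation gives $pK+1=pb(n+1-j-r)-b=b\big(p(n+1-j-r)-1\big)$, so the factor $b^{n-i}$ cancels all but $b^{-i}$ of the prefactor $b^{-n}$, leaving
\[
 \sum_{k} v_{i,k}^{(p)}(n)\,\tilde p_{k,j}
 = b^{-i}\sum_{r=0}^{n-j}(-1)^r\binom{n+1}{r}\big(p(n+1-j-r)-1\big)^{n-i}.
\]

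The hard part---the additional combinatorial argument flagged before the statement---is to show that this last sum equals $(-1)^i\,v_{i,j}^{(p)}(n)$; the extra sign $(-1)^i$ is exactly what upgrades the eigenvalue from $b^{-i}$ to $(-b)^{-i}$. I would argue as follows. The summand $\big(p(n+1-j-r)-1\big)^{n-i}$ is a polynomial in $r$ of degree $n-i\le n$, so by the vanishing of $(n+1)$-st finite differences (the identity $\sum_{r=0}^{n+1}(-1)^r\binom{n+1}{r}r^k=0$ for $k\le n$ used in Lemma~\ref{lem:lastzero}) the full sum over $r=0,\dots,n+1$ is zero. Hence the truncated sum equals minus its tail $\sum_{r=n-j+1}^{n+1}$; reindexing this tail by $r\mapsto n+1-r$ turns it into a sum over $s=0,\dots,j$ with summand $\big(p(s-j)-1\big)^{n-i}$ and binomial $\binom{n+1}{s}$, carrying overall sign $(-1)^n$ (combining the leading minus with the parity $(-1)^{n+1}$ produced by the reflection). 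Finally, using $p(s-j)-1=-\big(p(j-s)+1\big)$ to extract $(-1)^{n-i}$, the remaining sum is exactly the defining expression $\sum_{s=0}^{j}(-1)^s\binom{n+1}{s}\big(p(j-s)+1\big)^{n-i}=v_{i,j}^{(p)}(n)$, and the accumulated sign is $(-1)^{n}(-1)^{n-i}=(-1)^i$, as needed.

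With the eigenvector identity established, the conclusion about the stationary distribution follows exactly as in Theorem~\ref{thm:main}: the row $i=0$ is a left eigenvector for eigenvalue $1$, and dividing by its entry sum $p^n n!$ (Lemma~\ref{lem:sumeuler}) gives $\pi=\frac{1}{p^nn!}\big(\eunump{n}{0},\dots,\eunump{n}{m-1}\big)$. The only genuinely new input relative to the positive-base case is the reflection $r\mapsto n+1-r$ in the third paragraph; everything else is a transcription of the earlier argument with $K(j,r)$ replaced by its negative-base analogue.
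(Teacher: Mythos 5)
Your proposal is correct and follows essentially the same route as the paper: substitute the formula from Lemma~\ref{lem:negstandardp}, interchange sums, evaluate the inner sum via the generating-function identity to get $\bigl(pK(j,r)+1\bigr)^{n-i}=\bigl(b(p(n+1-j-r)-1)\bigr)^{n-i}$, and then use the reflection $r\mapsto n+1-r$ together with the vanishing of the full $(n+1)$-st finite difference to produce the extra factor $(-1)^i$. The only difference is cosmetic (you invoke the vanishing identity before reindexing, the paper reindexes first and leaves the final cancellation implicit), and your sign bookkeeping correctly yields $(-b)^{-i}$.
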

\begin{proof}
 It suffices to show that
 \begin{equation}\label{eq:negeigen}
 \sum_{k=0}^{m - 1}v_{i,k}^{(p)}(n)\tilde{p}_{k,j}
 =
  \frac{1}{(-b)^i}v_{i,j}^{(p)}(n).
 \end{equation}
 Recall that
 \[
 \tilde{p}_{k,j}
 =
 \frac{1}{b^n}
 \sum_{r = 0}^{n-j}
 (-1)^r
 {n + 1 \choose r}
 {n + K(j,r) - k \choose n},
 \]
 where, we put $K(j,r) = b(n + 1 - j - r) -\frac{b + 1}{p}$ for the simplicity
 of the notation.
 Therefore,
 \begin{eqnarray*}
  \mbox{L.H.S. of }(\ref{eq:negeigen})  & = &
   \sum_{k = 0}^m
   \frac{1}{b^n}\sum_{r = 0}^{n - j}
   (-1)^r{n + 1 \choose r}{n + K(j,r) - k \choose n}v_{i,k}^{(p)}(n)\\
  & = &
   \frac{1}{b^n}\sum_{r = 0}^{n - j}
   \sum_{k = 0}^{K(j,r)}
   (-1)^r{n + 1 \choose r}{n + K(j,r) - k \choose n}v_{i,k}^{(p)}(n)\\
  & = &
   \frac{1}{b^n}\sum_{r = 0}^{n - j}
   (-1)^r{n + 1 \choose r}
   \sum_{k = 0}^{K(j,r)}{n + K(j,r) - k \choose n}v_{i,k}^{(p)}(n)\\
  & = &
   \frac{1}{b^n}\sum_{r = 0}^{n - j}
   (-1)^r{n + 1 \choose r}
   \left[pK(j,r) + 1\right]^{n - i}\\
  &  &~~~~~~~~~~~~~~~~~~
   \left(\mbox{We use the same argument as in the proof of Theorem }\ref{thm:transp}.\right)
   \\
  & = &
   \frac{1}{b^n}\sum_{r' = j + 1}^{n + 1}
   (-1)^{n + 1 - r'}{n + 1 \choose n + 1 - r'}
   \left[pK(j,n + 1 - r') + 1\right]^{n - i}\\
  &  &~~~~~~~~~~~~~~~~~~
   \left(\mbox{We use the transformation } r'= n + 1 -r.\right)
   \\
  & = &
   \frac{1}{b^n}\sum_{r = j + 1}^{n + 1}
   (-1)^{n + 1 - r}{n + 1 \choose r}
   \left[-b(p(j - r') + 1)\right]^{n - i}.
 \end{eqnarray*}
\end{proof}

\section{Concluding remarks}

Many natural questions arise.

In the forthcoming paper, we will show a 
formula for  the right eigenvectors,
which involves Stirling numbers.

Our theorems hold only for the numeration systems
$(b,{\mathcal D})$, where ${\mathcal D}$ consists of
consecutive integers containing $0$.
For example, the $2$-carry process over $(3,\left\{-1, 0, 4\right\})$
has rather large state space
\[
 \Omega_2(3,\{-1,0,4\}) = \{-5,-4,\ldots,4\},
\]
and the transition probability matrix
\[
P =\frac{1}{9}
\begin{pmatrix}
1 & 2 & 0 & 3 & 0 & 2 & 1 & 0 & 0 & 0\\
2 & 0 & 0 & 2 & 1 & 4 & 0 & 0 & 0 & 0\\
1 & 0 & 2 & 0 & 3 & 2 & 0 & 1 & 0 & 0\\
0 & 1 & 2 & 0 & 3 & 0 & 2 & 1 & 0 & 0\\
0 & 2 & 0 & 0 & 2 & 1 & 4 & 0 & 0 & 0\\
0 & 1 & 0 & 2 & 0 & 3 & 2 & 0 & 1 & 0\\
0 & 0 & 1 & 2 & 0 & 3 & 0 & 2 & 1 & 0\\
0 & 0 & 2 & 0 & 0 & 2 & 1 & 4 & 0 & 0\\
0 & 0 & 1 & 0 & 2 & 0 & 3 & 2 & 0 & 1\\
0 & 0 & 0 & 1 & 2 & 0 & 3 & 0 & 2 & 1
\end{pmatrix},
\]
whose characteristic polynomial $\det(xI - P)$ is
\[
 (x - 1)
 (3x - 1)
 (9x - 1)
 (531441x^7 - 19683x^5 + 5103x^4 - 1944x^3 - 297x^2 + 24x + 2).
\]
Although there are eigenvalues of the form $1,1/3,1/3^2$,
we have no knowledge on the rest of the eigenvalues.
The difficulty comes from the geometric structure of the fundamental
domain \\$ \left\{(x_lx_{l - 1} \ldots x_{0})_{b}\,|\, l\geq 0, x_k \in {\mathcal D}\right\}.$

Diaconis and Fulman \cite{DiaconisFulman,DiaconisFulman2} shows the
relation between carries processes and shufflings for the
case when $p = 1$ and $2$.
We do not know whether there exist some shufflings
corresponding to the cases with $p \neq 1, 2$.


\end{document}